\newcolumntype{C}[1]{>{\centering\arraybackslash}p{#1}}
\newcolumntype{L}[1]{>{\raggedright\arraybackslash}p{#1}}
\renewcommand{\S}{\mathds{S}}
\title[Complete system for the in-, circumradius and diameter in $3$-space]{A complete system of inequalities for the diameter, in-, and circumradius in the $3$-dimensional Euclidean space}
\author[R. Brandenberg]{René Brandenberg}
\author[B. González]{Bernardo González Merino}
\author[M. Runge]{Mia Runge}
 \thanks{2020 Mathematics Subject Classification. 	52A40  ;	52A15 .
This research has been funded by the PID2022-136320NB-I00 project / AEI/10.13039/501100011033/ FEDER, UE}
\begin{document}

\newtheorem{theorem}{Theorem}[section]
\newtheorem{lemma}[theorem]{Lemma}
\newtheorem{proposition}[theorem]{Proposition}
\newtheorem{corollary}[theorem]{Corollary}
\newtheorem{example}[theorem]{Example}
\newtheorem{problem}[theorem]{Problem}
\newtheorem{question}[theorem]{Question}
\newtheorem{conjecture}[theorem]{Conjecture}
\theoremstyle{definition}
\newtheorem{definition}[theorem]{Definition}
\newtheorem{remark}[theorem]{Remark}
\newtheorem*{definition*}{Definition}
\newtheorem*{proposition*}{Proposition}


\begin{abstract}
    We present a complete system of inequalities for the inradius, circumradius, and diameter in the $3$-dimensional Euclidean space. To do so, we prove quasiconcavity of the inradius evaluated over $n$-simplices with a common facet independently of the norm/gauge under consideration.
\end{abstract}
\keywords{Convex sets, Blaschke–Santaló diagram, Geometric inequalities, Complete
 system of inequalities, Inradius, Circumradius, Diameter, Isosceles simplices}
\maketitle

\section{Introduction}
Let $\B_2^n$ denote the $n$-dimensional \cemph{red}{(Euclidean) unit ball} and $\CK^n$ be the set of \cemph{red}{convex bodies} (i.e.~non-empty, convex, and compact sets) in $\R^n$. For any $K\in\CK^n$, let $R(K)$ be the \cemph{red}{circumradius} of $K$, (i.e.~the smallest $\rho\geq 0$ such that a translation of 
a ball of radius $\rho$ covers $K$) and $r(K)$ be the \cemph{red}{inradius} of $K$ (i.e. the largest $\rho\geq 0$ such that a translation of a ball of radius $\rho$ is contained in $K$).  Finally, let $D(K)$ be the \cemph{red}{diameter} of $K$ (i.e.~the maximal length of a segment within $K$). 

The aim of this paper is to describe the range of values that the inradius, circumradius and diameter of $K$ in the $3$-dimensional Euclidean space may achieve. To do so, we compute a \cemph{red}{complete system of inequalities} for those functionals, i.e. a list of inequalities such that if and only if a given 3-tuple of parameters $(r,D,R)$ fulfills all those inequalities, there exists a convex body whose inradius, diameter, and circumradius coincide with those parameters. 

\begin{theorem}
    \label{thm:fulldiagram}
   Let $K\in\CK^3$. Then, 
   \begin{equation}\label{eq:4eineqsKnown}
          2R(K) \geq D(K),\quad \sqrt{3}D(K)\geq\sqrt{8}R(K),\quad D(K)\geq r(K)+R(K),\quad r(K) \geq 0,
   \end{equation}
   and whenever $D(K) \le \sqrt{3}R(K)$ holds true then
   \begin{equation}\label{eq:NewInequality}
          r(K) \geq \frac{D(K)^2\sqrt{3R(K)^2-D(K)^2}}{4R(K)\sqrt{3R(K)^2-D(K)^2}+\sqrt{3}(4R(K)^2-D(K)^2)} \ .
   \end{equation}
    Moreover, \eqref{eq:4eineqsKnown} and \eqref{eq:NewInequality} state a complete system of inequalities for 
    the inradius, diameter, and circumradius in Euclidean 3-space.
\end{theorem}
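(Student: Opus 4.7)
The plan is to split the proof into a \emph{necessity} part, showing that every $K\in\CK^3$ satisfies the listed inequalities, and a \emph{completeness} part, showing that every admissible triple $(r,D,R)$ is realized by some convex body. The four inequalities in \eqref{eq:4eineqsKnown} are classical: $2R\geq D$ because the circumscribing ball of radius $R$ contains every chord of $K$; Jung's theorem in $\R^3$ supplies $\sqrt{3}D\geq\sqrt{8}R$; a Bonnesen-type argument (a chord through the incenter extended on both sides) yields $D\geq r+R$; and $r\geq 0$ is trivial. The genuinely new content is therefore the necessity of \eqref{eq:NewInequality} together with the completeness assertion.

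For \eqref{eq:NewInequality}, I would fix $D$ and $R$ with $D\leq\sqrt{3}R$ and minimize $r(K)$ over $K\in\CK^3$ with $D(K)=D$ and $R(K)=R$. Blaschke selection delivers a minimizer $K_\star$, and the first reduction is to show that $K_\star$ may be taken to be a tetrahedron: any extreme point that is neither an endpoint of a diameter chord nor lies on the circumscribing sphere can be removed without altering $(D,R)$ and without increasing the inradius, so a Carathéodory-type argument cuts the extreme set down to at most four points. Degenerate outcomes (segments or triangles) correspond to boundary configurations already covered by \eqref{eq:4eineqsKnown}, leaving a genuine tetrahedron in the generic case.

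Among tetrahedra, the decisive tool is the quasiconcavity of the inradius over $n$-simplices sharing a common facet, established earlier in the paper. I would fix a facet $F$ of $K_\star$ and move the opposite vertex along the one-parameter locus that keeps $D$ and $R$ at their prescribed values; quasiconcavity then forces the minimum of $r$ on this locus to be attained at an endpoint, i.e.~at a tetrahedron with an extra edge of length $D$ or an extra vertex on the circumsphere. Iterating this symmetrization over the four facets pins the extremizer down to an isosceles simplex for which explicit coordinates are available, and a direct computation of $r$ in terms of $D$ and $R$ for that simplex produces exactly the right-hand side of \eqref{eq:NewInequality}.

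For completeness, I would exhibit explicit families tracing out the boundary of the admissible region---balls, segments, regular tetrahedra on the Jung boundary $\sqrt{3}D=\sqrt{8}R$, Minkowski sums of a ball with a segment along $D=r+R$, and the isosceles simplices of the preceding step along \eqref{eq:NewInequality}---and then fill the interior by continuous deformations between these extremes, invoking connectedness of the image in $(r,D,R)$-space. The main obstacle is the reduction to isosceles simplices under the simultaneous constraints $D(K)=D$ and $R(K)=R$: admissible one-parameter motions of a vertex are rigidly restricted, so one has to verify carefully that the quasiconcavity-based symmetrization actually terminates at the stated isosceles form rather than at a merely local critical configuration. Once that reduction is justified, verifying \eqref{eq:NewInequality} for the extremal simplex is routine algebra.
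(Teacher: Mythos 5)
Your outline gets the classical inequalities right and correctly identifies quasiconcavity as the engine, but the steps you gloss over are exactly where the proof lives, and two of them do not go through as stated. First, the reduction to a tetrahedron with the prescribed $(D,R)$ is unjustified: after Blaschke selection you must keep a diametrical pair (to preserve $D$) and enough circumsphere touching points to certify $R$ via \Cref{prop:opt} (up to four of them), and these need not overlap, so your ``Carath\'eodory-type argument'' leaves up to six points, not four. The paper's reduction is different and avoids this entirely: if $D(K)<\sqrt{3}R(K)$, then \Cref{prop:opt} together with lower-dimensional Jung forces four affinely independent touching points on the circumsphere; the simplex $S$ they span satisfies $R(S)=R(K)$, $r(S)\le r(K)$ and $D(S)\le D(K)$, and one concludes because the right-hand side of \eqref{eq:NewInequality} is \emph{decreasing} in $D$ --- no extremal body and no exact preservation of the diameter are needed. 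Second, \Cref{thm:minquasiconc} concerns a vertex moving through a \emph{convex} set $P$ (segments), whereas your ``one-parameter locus keeping $D$ and $R$ fixed'' lies on the sphere (and may even degenerate to isolated points), so the quasiconcavity result does not apply to it directly. Bridging precisely this gap is the content of the $\conv_{C,P}$ construction and \Cref{cor:bdarea}, together with the topological analysis of the admissible spherical region $T$ in \Cref{lem:defT,lem:choicesp1}; you flag this as ``the main obstacle'' but leave it unresolved. Moreover, the endpoint configurations that machinery yields are simplices with four diametrical edges, a two-parameter family $(a,b)$ whose inradius is computed in \Cref{lem:specialsimplices}; a further monotonicity argument (\Cref{lem:decreasingfct}, proved in the appendix) is still required to show that the isosceles simplices ($b=D$, five diametrical edges) are the unique minimizers, so the claim that iterated symmetrization ``pins the extremizer down to an isosceles simplex'' skips a genuine step.

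For completeness, ``connectedness of the image'' is not sufficient to fill a two-dimensional region between boundary curves; a connected image could miss interior points. The paper instead uses star-shapedness of $f(\CK^3)$ with respect to $f(\B_2^3)=(1,1)$ (\Cref{prop:starshape}), so that exhibiting the boundary families (regular simplex and its completions along Jung's bound, segment and Meissner bodies combined with the ball, isosceles triangles for $r=0$, and the isosceles simplices for \eqref{eq:NewInequality}) already fills the diagram. You would need this star-shapedness, or explicit two-parameter families, in place of your connectedness appeal.
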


The new contribution within this theorem 
is the last inequality. We prove the validity of \eqref{eq:NewInequality} and also that it is sharp if and only if $K$ is a 3-simplex 
having at least five diametrical edges (which includes the equilateral 3-simplex) or $K$ is an equilateral triangle. 
A lower bound for the inradius has been derived by Dekster forty years ago \cite[Lem.~1.3 (5)]{Dekster1985}, including the open task of finding a best possible inequality \cite[Rem.~(1)]{Dekster1985}.\footnote{\eqref{eq:NewInequality} has been explicitly determined (and claimed valid and optimal by experimental observations) within the Master thesis \enquote{Complete Systems of Inequalities Describing the Feasible Configurations of Triples of Geometrical Functionals of M.~Horsch, at Technical University of Munich (2019).}} 

This is the very first time that such a complete system of inequalities for a triple of functionals has been derived for the whole family $\CK^3$ of $3$-dimensional convex bodies. So far, only in \cite{HCPSS}, restricting to centrally symmetric convex bodies, such a system could be derived. 

In order to visualize Theorem \ref{thm:fulldiagram}, we consider the mapping
\begin{equation}
\label{eq:bsdiagramdef}
    f:\CK^3\rightarrow[0,1]^2,\quad \quad f(K):=\left(\frac{r(K)}{R(K)},\frac{D(K)}{2R(K)}\right).
\end{equation}

Realize that a complete description of the so-called $(r,D,R)$-Blaschke-Santal\'o diagram $f(\CK^3)$ is equivalent to providing a complete system of inequalities for the inradius, circumradius and diameter. See \Cref{fig:Euclideandiagram} for a sketch of the $(r,D,R)$-diagram \eqref{eq:bsdiagramdef}.

Historically, it was Blaschke who, in 1916, first proposed the question of what values the volume, surface area, and integral mean curvature of three-dimensional convex bodies can have \cite{blaschke}. Later on Santal\'o \cite{santalo} studied complete systems of inequalities for planar sets for triples of geometric functionals (
including the planar analog of \Cref{thm:fulldiagram}). 
Several other authors continued Santaló's work \cite{cifre03,delyon2021,cifre00Dwr,cifre02}
and even different functionals \cite{ftouhi2022complete, ftouhi} or four functionals at the same time \cite{3dimBSdiagram, ting2005extremal} 
have been considered. 

In order to show the fifth inequality in Theorem \ref{thm:fulldiagram}, we prove a quasiconcavity property for the inradius with respect to simplices sharing a common facet. We do so, not restricting to the Euclidean case. For $K,C\in \CK^n$, let $r(K,C)$ denote the \cemph{red}{inradius of $K$ with respect to $C$}, i.e.~the largest $\rho\geq 0$ such that a translation of $K$ contains $\rho C$.  
The set of \cemph{red}{extreme points} of $K$, \ie~those points $p$ in $K$ that are not contained in the convex hull of $K \setminus \set{p}$, 
is denoted by $\ext(K)$. 
\begin{theorem}
\label{thm:minquasiconc}
Let $C\in\CK^n$ be full-dimensional and $p^2,\ldots,p^{n+1}\in\R^n$ 
affinely independent, and $P$ be a convex set contained in the open half-space bounded by the affine hull of $p^2,\ldots,p^{n+1}$. Define $S_p$, $p \in P$,
to be the simplex with vertices $\set{p,p^2,\ldots,p^{n+1}}$. Then there exists 
$p^* \in \ext(P)$ such that $S_{p^*}$ has minimal inradius $r(S_{p^*},C)$ over all simplices $S_{p}, p \in P$.
\end{theorem}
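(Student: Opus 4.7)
My plan is to show that $p \mapsto r(S_p, C)$ is quasiconcave on $P$; the conclusion then follows by the standard principle that continuous quasiconcave functions on compact convex sets attain their minimum at an extreme point. The key tool is the representation, from LP duality for the inradius problem together with Minkowski's relation $\sum_i A_i(p) u^i(p) = 0$ (the unique positive linear dependence, up to scaling, among the $n+1$ outer normals of a simplex in $\R^n$),
\[
r(S_p, C) = \frac{n\, \mathrm{vol}(S_p)}{\sum_{i=1}^{n+1} A_i(p) h_C(u^i(p))},
\]
where $A_i(p)$ is the $(n-1)$-volume and $u^i(p)$ the outer unit normal of the $i$-th facet of $S_p$, and $h_C$ is the support function of $C$. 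My goal is to show that this is a ratio of an affine-linear and a convex function of $p$.

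The numerator is, up to a fixed sign, $\det(p^2-p,\ldots,p^{n+1}-p)/(n-1)!$; subtracting the last column from the others eliminates $p$ from all but one column, so the determinant becomes affine-linear in $p$, with constant sign on $P$ since $P$ lies on one side of $\mathrm{aff}(p^2, \ldots, p^{n+1})$.

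For the denominator, the central observation is that each outer area-vector $w^i(p) := A_i(p) u^i(p)$ is affine-linear in $p$. The vector $w^1(p)$ is constant (the opposite facet $F$ is fixed), and for $i \geq 2$, up to a fixed sign consistent over the connected set $P$,
\[
w^i(p) = \frac{1}{(n-1)!} \star \bigwedge_{j \in \{2,\ldots,n+1\}\setminus\{i\}} (p^j - p).
\]
Expanding the wedge by multilinearity, every term with two or more factors of $p$ vanishes (since $p \wedge p = 0$), leaving an affine function of $p$. Positive homogeneity of $h_C$ then gives $A_i(p) h_C(u^i(p)) = h_C(w^i(p))$, the composition of the sublinear function $h_C$ with an affine map, which is convex in $p$; summing over $i$ yields convexity of the denominator.

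With $r(S_p, C) = N(p)/D(p)$, where $N$ is affine-linear and $D$ is convex (both positive on $P$), the super-level sets $\{p \in P : N(p) - \rho D(p) \geq 0\}$ are super-level sets of the concave functions $N - \rho D$, hence convex; this is the quasiconcavity of $p \mapsto r(S_p, C)$. The extreme-point conclusion then follows from the standard one-dimensional unimodality argument: the restriction of $r(S_\cdot, C)$ to any chord in $P$ is continuous and quasiconcave on an interval, so its minimum lies at an endpoint, and iterating this descent through the face structure of $P$ produces an extreme minimizer. The principal obstacle is the wedge-product identification of $w^i(p)$ as an affine function of $p$; once this is established, quasiconcavity and the extreme-point conclusion follow from routine convex analysis.
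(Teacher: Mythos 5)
Your proposal is correct in substance, but it reaches the theorem by a genuinely different route than the paper. You derive quasiconcavity of $p\mapsto r(S_p,C)$ globally from the closed-form simplex inradius formula $r(S_p,C)=n\,\mathrm{vol}(S_p)\big/\sum_i h_C\bigl(w^i(p)\bigr)$, observing that the numerator is affine in $p$ (constant sign on the open half-space) and that each outer area vector $w^i(p)$ is affine in $p$, so the denominator is a sum of compositions of the sublinear $h_C$ with affine maps, hence convex and positive; a positive affine-over-convex ratio has convex superlevel sets, and the extreme-point conclusion follows. The paper instead never invokes volumes, areas, or support-function calculus: it proves quasiconcavity along the segment $[p^0,p^1]$ constructively (\Cref{lem:quasiconc}), by noting via \Cref{prop:opt} that both optimally inscribed copies of $C$ touch the common facet, so the two incenters differ by a vector parallel to that facet; \Cref{lem:coeffquotient} then shows the ratio of the first barycentric coordinates is independent of the point of $C$, which allows an explicit convex combination of the two coordinate representations to exhibit an inscribed translate of $rC$ in the intermediate simplex $K_\alpha$, and a continuity argument (\Cref{lem:quasiconcsegment}) removes the equal-inradius assumption before the same face-descent step you sketch. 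Your approach buys a stronger and cleaner structural statement (an explicit affine/convex fractional representation, hence quasiconcavity on the entire open half-space) at the cost of importing the inradius formula, whose justification (all facets of a simplex are touched by the inball, Minkowski's relation, the LP duality pairing) and the sign-consistency of the Hodge-dual area vectors over connected $P$ you assert rather than prove; the paper's argument is longer but elementary, self-contained, and exhibits the inscribed body directly from the containment characterization. Two minor points to tidy: both arguments (yours and the paper's) implicitly require that a minimizer exists and that $\mathrm{ext}(P)\neq\emptyset$, i.e.\ effectively that $P$ is compact, which you should state; and in the superlevel-set step the equivalence $\{N/D\geq\rho\}=\{N-\rho D\geq 0\}$ gives concavity of $N-\rho D$ only for $\rho\geq 0$, the case $\rho<0$ being trivial since $N/D>0$.
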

Let us observe that \Cref{thm:minquasiconc} has been proven for the $2$-dimensional Euclidean case \cite{santalo}, but never before in its full generality. This may open the door for obtaining new inequalities 
in higher dimensions, for different gauges, and other combinations of functionals.


The paper is organized as follows. In \Cref{sec:technical}, we collect the definitions and technical results needed throughout the paper. In \Cref{sec:quasiconc}, we prove \Cref{thm:minquasiconc} and in \Cref{sec:euclidean}, we show the new inequality \eqref{eq:NewInequality} in order to prove \Cref{thm:fulldiagram}.
  
  \begin{figure}
    \centering
    \begin{tikzpicture}[scale=4]

     \draw[->] (-0.1,0) -- (1.1,0) node[right] {$\frac{r}{R}$};
     \draw[->] (0,-0.1) -- (0,1.2) node[above] {$\frac{D}{2R}$}
     ;

     \draw[ domain=0:1, smooth, variable=\x, color=black] plot ({\x}, {1})
     ;
     \draw[ domain=0.633:1, smooth, variable=\x] plot ({\x}, {0.5*\x+0.5})
     ;
     \draw[name path=F4, domain=0.333:0.633, smooth, variable=\x] plot ({\x},{0.8165} ) ;
     \draw[ domain={2/sqrt(6)}:{sqrt(3)/2}, smooth, variable=\y] plot ({(4*((\y)^2)*sqrt(3-4*((\y)^2)))/(4*sqrt(3-4*((\y)^2))+sqrt(3)*(4-4*((\y)^2)))},{\y} ); 
     \draw[ domain=0:0.333, dotted, variable=\x] plot ({\x}, {0.8165})
     ;
     \draw[domain=0:0.05, variable=\x] plot ({\x},{sqrt(3)/2});
       \draw[ domain=0:1, dotted, variable=\y] plot ({1}, {\y})
     ;
      \node[label=below:$\phantom{\sqrt{\frac{8}{3}}}1\phantom{\sqrt{\frac{8}{3}}}$] (one) at (1,0) {};
     \draw[ domain=0:0.8165, dotted, variable=\y] plot ({0.333}, {\y})
     ;
     \draw[ domain=0:0.8165, dotted, variable=\y] plot ({0.633}, {\y})
     ;
      \node[label=below:$\phantom{\sqrt{\frac{8}{3}}}\frac{1}{3}\phantom{\sqrt{\frac{8}{3}}}$] (max) at (0.333,0) {};
      \node[label=above left:$1$] (delta) at (0,0.95) {};
       \node[label=below:$\sqrt{\frac{8}{3}}-1$] (delta2) at (0.633,0) {};
       
      \node[label=left:$\frac{\sqrt{3}}{2}$] (rho) at (-0.1,0.9) {};
       \node[label=left:$\frac{2}{\sqrt{6}}$] (rho2) at (-0.1,0.68) {};
 \tkzDefPoint(-0.03,{sqrt(3)/2+0.01}){A}
 \tkzDefPoint(-0.03,{sqrt(2/3)-0.01}){B}
 \tkzDefPoint(-0.13,0.9){C}
 \tkzDefPoint(-0.13,0.7){D}
 \tkzDrawSegment[thin](A,C)
 \tkzDrawSegment[thin](B,D)

     \draw[fill=black] (1,1) circle[radius=0.4pt];

     \draw[fill=black] (0,1) circle[radius=0.4pt];
     \draw[fill=black] (0,{sqrt(3)/2}) circle[radius=0.4pt];
     \draw[fill=black] (0.333,0.8165) circle[radius=0.4pt];
 \draw[fill=black] (0.633,0.8165) circle[radius=0.4pt];

 \end{tikzpicture}
    \caption{The new diagram $ f(\CK^3)$.}
    \label{fig:Euclideandiagram}
\end{figure}

\section{Technical results and definitions}\label{sec:technical}

For any $X\subset \R^n$,  the \cemph{red}{linear, affine,} and \cemph{red}{convex hull} are denoted by $\lin(X)$, $\aff(X)$, and $\conv(X)$, respectively.
The convex hull of two points $x$ and $y$ is called a \cemph{red}{segment} and is abbreviated by $[x,y]$. The open segment is denoted by $(x,y)$. 
The convex hull of $n+1$ affinely independent points is called a \cemph{red}{simplex}. The \cemph{red}{boundary} of $X$ is described by $\bd(X)$ and the \cemph{red}{interior} by $\inte(X)$. Analogously, the \cemph{red}{relative boundary} $\relbd(X)$ and \cemph{red}{relative interior} $\relint(X)$ of $X$ are the boundary and interior of $X$ evaluated  within $\aff(X)$. For any $X,Y\subset\R^n$ and $\rho\in\R$ let $X+Y:=\{x+y: x\in X, y\in Y\}$ be the \cemph{red}{Minkowski sum} of $X$ and $Y$ and $\rho X:=\{\rho x: x\in X\}$ the $\rho$-\cemph{red}{dilatation} of $X$. We abbreviate $\set{x}+Y=:x+Y$ and $(-1)X=:-X$.
The \cemph{red}{support function} of $K \in \CK^n$ is defined as $h_K(\cdot): \R^n \to \R $, $h_K(a):=\max_{x\in K}a^Tx$ and 
the \cemph{red}{polar} as $K^\circ := \{x\in\R^n:x^Ty\leq h_C(y)\text{ for all } y \in K\}$.

The \cemph{red}{circumradius} of $K \in \CK^n$ 
is defined as 
\[
 R(K):=\min\{\rho\geq 0: \exists t \in \R^n \text{ such that } K\subset t+\rho \B^n_2\}
\]
and the \cemph{red}{diameter} as the longest segment in $K$:
\begin{equation*}
    D(K):=\max_{x,y \in K} \norm{x-y}_2. 
\end{equation*}

The \cemph{red}{inradius} of $K \in \CK^n$ with respect to $C\in\CK^n$ is defined as
\[
  r(K,C):=\max\{\rho\geq 0: \exists t \in \R^n \text{ such that } t+ \rho C\subset K\}
\]
and we abbreviate $r(K):=r(K,\B^n_2)$. 
A set $t+r(K,C)C$, $t\in\R^n$, which is contained in $K$ is called an \cemph{red}{inball} of $K$.
One may recognize that $R(K)=R(\ext(K))$ and $D(K)=D(\ext(K))$ (c.f.~\cite{bonnesenfenchel}), which in particular means that the diameter of a polytope, and more specifically a simplex, is attained between two of its vertices.

 One of the first inequalities found, relating these functionals, is Jung's inequality, bounding the diameter from below by the circumradius \cite{Jung1901}. For $K\in\CK^n$ one has 
    \begin{equation}
    \label{eq:jungeucl}
        R(K)\sqrt{\frac{2(n+1)}{n}}\leq D(K). 
    \end{equation}
We use the following notation for \cemph{red}{hyperplanes}:  for $a\in\R^n\setminus\set{0}$ and $\beta\in \R$, we write $H^{ =}_{(a,\beta)}:=\set{x\in\R^n:a^{\top}x = \beta}$ and the according \cemph{red}{half-spaces} are denoted analogously using "$\leq$" and "$\geq$". 


For $K,C\in \CK^n$ we say that $K$ is \cemph{red}{optimally contained} in $C$ if $K\subset C$ and $r(C,K)=1$, which is abbreviated by $K\optc C$.
A proof for the following characterization of optimal containment can be found in \cite{NoDimIndep}.

 \begin{proposition}
    \label{prop:opt}
    Let $K,C\in \CK^n$. Then $K\optc C$ if and only if
    \begin{itemize}
        \item[i)] $K\subset C$ and
        \item[ii)] for some $k\in\{2,\hdots,dim(C)+1\}$, there exist $p^1,\hdots,p^k\in \relbd(K)\cap \relbd(C)$ and half-spaces $H^{\leq}_{(a^{i},(a^i)^Tp^i)}$ supporting $C$ at $p^{i}$ with $a^1,\hdots,a^k\in \ext(C^{\circ})\setminus\{0\}$, affinely independent, such that $0\in \conv(\{a^1,\hdots,a^k\})$.
    \end{itemize}

\end{proposition}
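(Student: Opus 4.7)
The plan is to prove the two implications separately. Since optimal containment is translation-invariant, I translate so that $0\in\inte(C)$; consequently $h_C(a)>0$ for every $a\neq 0$. For the sufficiency direction $(\Leftarrow)$, suppose (i) and (ii) hold, and note that $(a^i)^{T}p^i=h_C(a^i)>0$ for each $i$. Assume toward a contradiction that $r(C,K)>1$: there exist $\rho>1$ and $t\in\R^n$ with $\rho K+t\subset C$. For each $i$, $\rho p^i+t\in C$ combined with the supporting halfspace $H^{\leq}_{(a^i,(a^i)^{T}p^i)}$ gives $(\rho-1)(a^i)^{T}p^i+(a^i)^{T}t\leq 0$. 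Pick $\lambda_1,\ldots,\lambda_k\geq 0$ with $\sum_i\lambda_i=1$ and $\sum_i\lambda_i a^i=0$, possible by (ii). The $\lambda$-weighted sum of these inequalities cancels the translation term and yields $(\rho-1)\sum_i\lambda_i h_C(a^i)\leq 0$, contradicting the positivity of both factors.

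For the necessity direction $(\Rightarrow)$, set $F:=K\cap\bd(C)$ and let $N\subset\ext(C^\circ)\setminus\{0\}$ consist of those extreme normals that support $C$ at some $p\in F$. If $F=\emptyset$, then $K\subset\inte(C)$ and a compactness argument immediately produces $\rho>1$ and $s$ with $\rho K+s\subset C$, contradicting $r(C,K)=1$; hence $F\neq\emptyset$. I claim $0\in\conv(N)$. Otherwise, separation provides $t\in\R^n$ with $a^{T}t<0$ for all $a\in N$. Since every outer normal of $C$ at any $p\in F$ is a nonnegative combination of elements of $N$, the shifted point $p+\varepsilon t$ lies in $\inte(C)$ for all sufficiently small $\varepsilon>0$; compactness of $F$ (together with $K\setminus F\subset\inte(C)$) produces a uniform $\varepsilon_0>0$ with $K+\varepsilon_0 t\subset\inte(C)$. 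Since $K+\varepsilon_0 t$ then sits at positive Hausdorff distance from $\bd(C)$, one finds $\rho>1$ and $s\in\R^n$ with $\rho K+s\subset C$ (explicitly, $s=\varepsilon_0 t-(\rho-1)c$ for any anchor $c\in K$ and $\rho-1$ small), again contradicting $r(C,K)=1$. Thus $0\in\conv(N)$, and a minimal-cardinality Carathéodory selection extracts affinely independent $a^1,\ldots,a^k\in N$ with $0\in\conv\{a^1,\ldots,a^k\}$, where $k\leq\dim(C)+1$ by Carathéodory and $k\geq 2$ since $0\notin N$.

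The most delicate step will be quantifying the shift-and-scale argument: from $K+\varepsilon_0 t\subset\inte(C)$ with Hausdorff gap $\delta>0$, one must choose $\rho>1$ small enough that the Minkowski-error $(\rho-1)K$ is swallowed by $\delta$. A subsidiary technicality is to justify that every outer normal at a contact point is a nonnegative combination of elements of $\ext(C^\circ)$, which is where the extremality restriction $a^i\in\ext(C^\circ)$ in the statement becomes essential for the separation argument to survive the passage from the full normal cone at $p$ to a finite affinely independent subset.
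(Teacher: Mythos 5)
The paper does not prove this proposition itself but cites Brandenberg--König \cite{NoDimIndep}, and your argument is essentially the standard proof given there: for sufficiency, summing the supporting inequalities against a convex combination of the normals that annihilates the translation term and leaves $(\rho-1)\sum_i\lambda_i h_C(a^i)>0$; for necessity, separation plus a uniform inward translation and rescaling. The only points left implicit are routine: strict separation of $0$ from $\conv(N)$ requires observing that $\conv(N)$ coincides with the convex hull of the \emph{compact} set of all normalized contact normals over $F$ (so that it is closed), and the case of lower-dimensional $C$ is handled by working inside $\aff(C)$, which is where the $\relbd$ and $\dim(C)+1$ in the statement come from.
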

\begin{remark}
    \label{rem:opteuclidean}
    Note that in the Euclidean case $C=\B^n_2$, the boundary points $p^i$ and outer normals $a^i$ in \Cref{prop:opt} ii) coincide. Thus, the condition can be expressed as $0\in \conv(\{p^1,\hdots,p^k\})$. Moreover, in this case, if all the $p^i$ are contained in a half-space with 0 in its boundary, then already the convex hull of the points of $K$ in the 
   bounding hyperplane
    must be optimally contained in $\B^n_2$.
\end{remark}

The following proposition from \cite{3dimBSdiagram} shows that the diagram $f(\CK^n)$ 
is star-shaped with respect to the vertex $f(\B^n_2)=(1,1)$. Thus, it suffices to describe the boundaries of the diagram to show the completeness of such a system of inequalities.  
\begin{proposition}
    \label{prop:starshape}
    Let $K \in  \CK^n$ be such that $K\optc \B^n_2$. Then,
    \begin{equation*}
         f ((1 -\lambda)K +\lambda \B^n_2) = (1-\lambda)f(K)+\lambda f(\B^n_2),
    \end{equation*}
for every $\lambda \in [0,1]$.
\end{proposition}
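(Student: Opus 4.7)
The plan is to verify that each of the three functionals behaves affinely along the Minkowski segment $K_\lambda := (1-\lambda)K + \lambda\B^n_2$ and then to conclude the claim by inserting these identities into the definition of $f$. Concretely, I aim to prove
\begin{equation*}
R(K_\lambda)=1,\qquad r(K_\lambda)=(1-\lambda)r(K)+\lambda,\qquad D(K_\lambda)=(1-\lambda)D(K)+2\lambda.
\end{equation*}
Combined with $R(K)=1$ (granted by $K\optc\B^n_2$), $r(\B^n_2)=1$ and $D(\B^n_2)=2$, these three equalities yield $f(K_\lambda)=(1-\lambda)f(K)+\lambda f(\B^n_2)$ directly.

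The circumradius identity is the step where the hypothesis $K\optc\B^n_2$ is essential. The inclusion $K_\lambda\subset(1-\lambda)\B^n_2+\lambda\B^n_2=\B^n_2$ yields $R(K_\lambda)\le 1$. For the reverse inequality, \Cref{prop:opt} together with \Cref{rem:opteuclidean} supplies points $p^1,\dots,p^k\in K\cap\bd\B^n_2$ with $0\in\conv\{p^1,\dots,p^k\}$. Each such $p^i$ satisfies $p^i=(1-\lambda)p^i+\lambda p^i$, so $p^i\in K_\lambda$, and the same family then certifies $K_\lambda\optc\B^n_2$ via \Cref{prop:opt}, forcing $R(K_\lambda)=1$.

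The inradius and diameter identities follow from the standard outer-parallel-body formulas $r(L+\rho\B^n_2)=r(L)+\rho$ and $D(L+\rho\B^n_2)=D(L)+2\rho$, valid for every $L\in\CK^n$ and every $\rho\ge 0$ without any containment hypothesis. The "$\ge$" directions are immediate (translate an inball of $L$, respectively extend a diameter segment by $\rho$ on both ends); for "$\le$" one uses $L+\rho\B^n_2=\{x:d(x,L)\le\rho\}$ together with the projection onto $L$ for the inradius, and the triangle inequality on the Minkowski sum for the diameter. Rewriting $K_\lambda=(1-\lambda)\bigl(K+\tfrac{\lambda}{1-\lambda}\B^n_2\bigr)$ for $\lambda<1$ (the case $\lambda=1$ being trivial) and using homogeneity then yields the claimed affine formulas.

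The main obstacle is really just the circumradius step: the inradius and diameter behave affinely along any Minkowski segment with a ball, so those formulas are entirely general, while $R(K_\lambda)=1$ genuinely requires that $\B^n_2$ remains the circumscribed ball of $K_\lambda$. The hypothesis $K\optc\B^n_2$ is used precisely to transfer the boundary contact points, together with their affine-dependence condition, from $K$ to the Minkowski average $K_\lambda$.
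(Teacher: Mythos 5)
Your proposal is correct, and it is essentially the expected argument: the paper itself gives no proof of \Cref{prop:starshape} (it is quoted from \cite{3dimBSdiagram}), and the standard proof there proceeds exactly as you do — the contact points from \Cref{prop:opt}/\Cref{rem:opteuclidean} survive the Minkowski average and keep $R\equiv 1$, while $r$ and $D$ are affine along segments toward the ball via the outer-parallel-body identities. Your verification of all three functional identities, including the correct handling of the case $\lambda=1$ and the homogeneity rescaling, is sound.
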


\section{Quasiconcavity of the inradius over a moving vertex of a simplex}
\label{sec:quasiconc}
To prepare the proof of \Cref{thm:minquasiconc}, we first consider the description of points in two simplices sharing a facet. For $k\in\N$ we use the notation $[k]:=\set{1,2,\ldots,k}$.
\begin{lemma}
    \label{lem:coeffquotient}
    Let 
    $K_0=\conv\left(\set{p^0,p^2,\ldots,p^{n+1}}\right)$ and $K_1=\conv\left(\set{p^1,p^2,\ldots,p^{n+1}}\right)$ be full-dimensional simplices such that $p^0$ and $p^1$ are contained in the same open half-space defined by $\aff\left(\set{p^2,\ldots,p^{n+1}}\right)$. Furthermore, let 
    \begin{align*}
        v&=\lambda_1 p^0 + \sum_{i=2}^{n+1}\lambda_{i}p^i \in K_0 \\
        w&=\mu_1p^1 + \sum_{i=2}^{n+1}\mu_{i}p^i \in K_1
        \end{align*}
        with coefficients $\lambda_i,\mu_i\geq 0$, $i\in[n+1]$,
        \begin{align*}
             \sum_{i=1}^{n+1} \lambda_i =\sum_{i=1}^{n+1} \mu_i = 1,
    \end{align*}
    such that $[v,w]$ is parallel to $\aff\left(\set{p^2,\ldots,p^{n+1}}\right)$. Then, the ratio $\frac{\lambda_1}{\mu_1}$ only depends on the set $\set{p^0,p^1,p^2,\ldots,p^{n+1}}$, but not on the positions of $v,w$. 
\end{lemma}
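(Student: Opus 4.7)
The approach is to reinterpret the leading barycentric coefficient as a scaled distance to the common facet hyperplane. Set $H:=\aff(\set{p^2,\ldots,p^{n+1}})$. By hypothesis $p^0$ and $p^1$ lie in the same open half-space bounded by $H$; I would first observe that any point $v\in K_0$ as in the statement lies in the closed half-space on the $p^0$-side of $H$, and analogously $w\in K_1$ lies on that same side. Consequently, signed and unsigned distances to $H$ agree throughout, and one may work with the Euclidean distance $d(\cdot,H)$.

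The key step is to compute $d(v,H)$ in terms of $\lambda_1$. If $\lambda_1<1$ (the case $\lambda_1=1$ reducing to $v=p^0$), set
\begin{equation*}
    q:=\tfrac{1}{1-\lambda_1}\sum_{i=2}^{n+1}\lambda_i p^i\in H,
\end{equation*}
so that $v=\lambda_1 p^0+(1-\lambda_1)q$ lies on the segment $[p^0,q]$. Since $x\mapsto d(x,H)$ is an affine function on $\R^n$ that vanishes on $H$, this yields $d(v,H)=\lambda_1\, d(p^0,H)$. The analogous identity $d(w,H)=\mu_1\, d(p^1,H)$ follows by the same argument applied to $K_1$.

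Finally, I would use the parallelism hypothesis: since $[v,w]$ is parallel to $H$, one has $d(v,H)=d(w,H)$. Both $d(p^0,H)$ and $d(p^1,H)$ are strictly positive because $p^0,p^1$ lie in an \emph{open} half-space bounded by $H$, so combining the two identities gives
\begin{equation*}
    \frac{\lambda_1}{\mu_1}=\frac{d(p^1,H)}{d(p^0,H)},
\end{equation*}
an expression depending solely on the set $\set{p^0,p^1,p^2,\ldots,p^{n+1}}$. I do not anticipate any real obstacle; the only care needed is to verify that the four relevant points lie on the same side of $H$ so the unsigned-distance identities are valid, which is immediate from the half-space hypothesis on $p^0,p^1$ and the nonnegativity of the barycentric coefficients.
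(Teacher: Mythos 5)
Your argument is correct and is essentially the paper's own proof in coordinate-free form: the paper translates and rotates so that $\aff(\{p^2,\ldots,p^{n+1}\})$ becomes $\{x_1=0\}$ and reads off $\lambda_1=v_1/p^0_1$, $\mu_1=w_1/p^1_1$ with $v_1=w_1$, which is exactly your identities $d(v,H)=\lambda_1 d(p^0,H)$, $d(w,H)=\mu_1 d(p^1,H)$ combined with $d(v,H)=d(w,H)$. (One cosmetic remark: the unsigned distance $x\mapsto d(x,H)$ is affine only on each closed half-space rather than on all of $\R^n$, but since you verify that all relevant points lie on the $p^0$-side of $H$, your computation is unaffected.)
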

\begin{proof}
   Since the coefficients $\lambda_i,\mu_i$, $i\in[n+1]$, are invariant under translation and rotation of $K_0$ and $K_1$, we may assume that $p^i_1=0$ for $i=2,\ldots,n+1$. This implies $v_1=w_1$ and $\lambda_1=\frac{v_1}{p^0_1}$ and $\mu_1=\frac{w_1}{p^1_1}$. It follows $\frac{\lambda_1}{\mu_1}=\frac{p^1_1}{p^0_1}$, which is independent of the positions of $v$ and $w$. 
\end{proof}

\begin{lemma}
\label{lem:quasiconc}
Let $C\in\CK^n$ be full-dimensional and $p^0,p^1,p^2,\ldots,p^{n+1}\in\R^n$ such that the points $p^2,\ldots,p^{n+1}$ are affinely independent and $p^0$ and $p^1$ lie in the same open half-space bounded by $\aff\left(\set{p^2,\ldots,p^{n+1}}\right)$.  Define 
$K_\alpha:=\conv\left(\set{(1-\alpha)p^0 +\alpha p^1,p^2,\ldots,p^{n+1}}\right)$ for $\alpha\in[0,1]$ and 
assume $r(K_0,C)=r(K_1,C)$. Then, 
  \begin{equation*}
        r(K_{\alpha},C)\geq r(K_1,C),\quad \alpha\in[0,1]. 
    \end{equation*}
\end{lemma}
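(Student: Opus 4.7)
The plan is to derive a closed form for the inradius of a simplex and apply it to the family $K_\alpha$. For any full-dimensional simplex $K$ with vertices $v_0,\ldots,v_n$ and affine barycentric coordinates $\lambda_0,\ldots,\lambda_n$, the containment $t+\rho C\subset K$ is equivalent to the system $\lambda_i(t)\ge \rho\,h_C(-\nabla\lambda_i)$ for $i=0,\ldots,n$, each inequality arising from minimising $\lambda_i$ over $t+\rho C$. Summing and using $\sum_i \lambda_i \equiv 1$ produces the identity
\begin{equation*}
\frac{1}{r(K,C)} \;=\; \sum_{i=0}^n h_C(-\nabla \lambda_i),
\end{equation*}
with equality attained by a suitable $t$. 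This will be the main tool.

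Write $\lambda_1^\alpha,\lambda_2^\alpha,\ldots,\lambda_{n+1}^\alpha$ for the barycentric coordinates of $K_\alpha$, with $\lambda_1^\alpha$ corresponding to the apex $(1-\alpha)p^0+\alpha p^1$. I want to express $\nabla\lambda_j^\alpha$ in terms of $\nabla\lambda_j^0$ and $\nabla\lambda_j^1$. Let $V_\alpha$ be the $(n+1)\times(n+1)$ matrix whose columns are the homogenised vertices of $K_\alpha$, set $D(\alpha):=\det V_\alpha$ and $D_\tau:=D(\tau)$ for $\tau\in\{0,1\}$. Since only the apex column of $V_\alpha$ varies with $\alpha$, and affinely so, multilinearity yields $D(\alpha)=(1-\alpha)D_0+\alpha D_1$. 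The half-space hypothesis forces $K_\alpha$ to be non-degenerate for every $\alpha\in[0,1]$, so $D_0,D_1$ share a sign and I may assume both positive. Writing each $\lambda_j^\alpha$ via Cramer's rule and exploiting multilinearity of $\det$ in the apex column then yields
\begin{equation*}
\nabla\lambda_1^\alpha \;=\; \tfrac{D_0}{D(\alpha)}\,\nabla\lambda_1^0 \;=\; \tfrac{D_1}{D(\alpha)}\,\nabla\lambda_1^1, \qquad \nabla\lambda_j^\alpha \;=\; \tfrac{(1-\alpha)D_0}{D(\alpha)}\,\nabla\lambda_j^0 + \tfrac{\alpha D_1}{D(\alpha)}\,\nabla\lambda_j^1\ \ (j\ge 2).
\end{equation*}
Setting $\beta:=(1-\alpha)D_0/D(\alpha)$ and $\gamma:=\alpha D_1/D(\alpha)$ produces non-negative weights with $\beta+\gamma=1$, and the $j=1$ formula can be rewritten as $\nabla\lambda_1^\alpha = \beta\nabla\lambda_1^0+\gamma\nabla\lambda_1^1$ using the identity $D_0\nabla\lambda_1^0=D_1\nabla\lambda_1^1$.

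Sublinearity of $h_C$ now gives $h_C(-\nabla\lambda_j^\alpha)\le \beta\,h_C(-\nabla\lambda_j^0)+\gamma\,h_C(-\nabla\lambda_j^1)$ for every $j\in\{1,\ldots,n+1\}$. Summing over $j$ and invoking the inradius identity from the first step yields
\begin{equation*}
\frac{1}{r(K_\alpha,C)} \;\le\; \beta\,\frac{1}{r(K_0,C)} + \gamma\,\frac{1}{r(K_1,C)} \;=\; \frac{1}{r(K_1,C)},
\end{equation*}
so $r(K_\alpha,C)\ge r(K_1,C)$ as claimed. The main obstacles are the Cramer-rule bookkeeping and the sign argument for $D_0,D_1$; everything else is standard manipulation with sublinear functions.
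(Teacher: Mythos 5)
Your proof is correct, and it takes a genuinely different route from the paper. You work dually: you derive the closed-form identity $1/r(K,C)=\sum_i h_C(-\nabla\lambda_i)$ for a full-dimensional simplex (which is valid, with the optimal center $t$ determined by $\lambda_i(t)=r\,h_C(-\nabla\lambda_i)$), then use Cramer's rule and multilinearity of the determinant in the apex column to show that the gradients $\nabla\lambda_j^\alpha$ are convex combinations, with $\alpha$-dependent weights $\beta=(1-\alpha)D_0/D(\alpha)$ and $\gamma=\alpha D_1/D(\alpha)$, of the corresponding gradients for $K_0$ and $K_1$; sublinearity of $h_C$ then gives $1/r(K_\alpha,C)\le\beta/r(K_0,C)+\gamma/r(K_1,C)$. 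The paper instead argues primally: it uses the optimal-containment characterization (\Cref{prop:opt}) to see that both inballs touch the common facet, hence the segment between their centers is parallel to it, invokes \Cref{lem:coeffquotient} to get a constant coefficient ratio $\kappa$, and then explicitly constructs an inball of $K_\alpha$ of the same radius as a convex combination of the two inballs. Your determinant ratio $D_1/D_0$ is exactly the paper's $\kappa$, so the two arguments are dual manifestations of the same reparametrized convexity, but yours bypasses \Cref{prop:opt} and \Cref{lem:coeffquotient} entirely, and in fact proves more: the inequality $1/r(K_\alpha,C)\le\max\{1/r(K_0,C),1/r(K_1,C)\}$ yields \Cref{lem:quasiconcsegment} directly, without the continuity and intermediate-value argument used there. (One cosmetic point: the sign claim is cleaner stated in the direction "the open half-space hypothesis makes the affine function $x\mapsto\det[(x;1),(p^2;1),\ldots,(p^{n+1};1)]$ take the same sign at $p^0$ and $p^1$, hence $D(\alpha)\neq0$ and $K_\alpha$ is non-degenerate", rather than deducing the sign agreement from non-degeneracy; either way the step is sound.)
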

\begin{proof}
    Let $r:= r(K_0,C)=r(K_1,C)$. Then for every $v\in C$ there exist coefficients $\lambda_{i,v}$, $\mu_{i,v}$, $i\in[n+1]$, and translations $c,d\in\R^n$ fulfilling 
     \begin{align}
          \lambda_{1,v}p^0 +\sum_{i=2}^{n+1} \lambda_{i,v} p^i &= rv +c  \label{eq:c1}\\
    \sum_{i=1}^{n+1} \lambda_{i,v}&=1 \label{eq:c2}\\
    \lambda_{i,v}&\geq 0 \label{eq:c3}
     \end{align}
     and 
     \begin{align}
          \sum_{i=1}^{n+1} \mu_{i,v} p^i &= rv +d \label{eq:d1}\\
    \sum_{i=1}^{n+1} \mu_{i,v}&=1 \label{eq:d2}\\
    \mu_{i,v}&\geq 0. \label{eq:d3}
     \end{align}
     Now we show that for every $\alpha\in[0,1]$, there exists a translation $e\in[c,d]$, such that $e+rC\subset K_{\alpha}$, by finding a feasible solution of 
\begin{align}
          \epsilon_{1,v}((1-\alpha)p^0+\alpha p^1) + \sum_{i=2}^{n+1} \epsilon_{i,v} p^i &= rv +e \label{eq:e1}\\
    \sum_{i=1}^{n+1} \epsilon_{i,v}&=1 \label{eq:e2}\\
    \epsilon_{i,v}&\geq 0, \quad i\in[n+1] \label{eq:e3}
\end{align}
for any $v\in C$. 
Since $K_0, K_1$ are full-dimensional simplices and $c+rC\optc K_0$ as well as $d+rC\optc K_1$, we know by \Cref{prop:opt} that both, $c+rC$ and $d+rC$, touch the common facet $\conv\left(\set{p^2,\ldots,p^{n+1}}\right)$ of $K_0$ and $K_1$. Thus, it follows that $[c,d]$ and therefore $[rv+c,rv+d]$ for every $v\in C$ are parallel to this facet. Hence, we can apply \Cref{lem:coeffquotient} and obtain $\frac{\lambda_{1,v}}{\mu_{1,v}}= \frac{\lambda_{1,w}}{\mu_{1,w}}=:\kappa$ for all $v,w\in C$. Moreover, since $\lambda_{1,v},\mu_{1,v}\neq 0$, it follows $\kappa\notin\set{0,\infty}$.
Now, for every $v\in C$ we define 
\begin{align*}
    \beta:&= \frac{1}{1+ \frac{1-\alpha}{\alpha}\kappa}  \in (0,1)\\
    e:&=(1-\beta)c+\beta d\\
    \epsilon_{i,v}:&= (1-\beta)\lambda_{i,v}+\beta \mu_{i,v}, \quad i\in[n+1] .
\end{align*}
Then, \eqref{eq:e2} directly follows from \eqref{eq:c2}, \eqref{eq:d2} and \eqref{eq:e3} directly from \eqref{eq:c3}, \eqref{eq:d3}. Moreover, from multiplying \eqref{eq:c1} by $(1-\beta)$ and \eqref{eq:d1} by $\beta$, one obtains
\begin{align*}
    (1-\beta)\lambda_{1,v}p^0 + \beta \mu_{1,v}p^1 + \sum_{i=2}^{n+1} \epsilon_{i,v} p^i = rv +e
\end{align*}
for every $v\in C$. 
Thus, it remains to show that we got the correct coefficient for $(1-\alpha)p^0+\alpha p^1$ to show \eqref{eq:e1}:
\begin{align*}
    \epsilon_{1,v}\left((1-\alpha)p^0+\alpha p^1\right)&=\left((1-\beta)\lambda_{1,v}+\beta \mu_{1,v}\right)\left((1-\alpha)p^0+\alpha p^1\right)\\
    &=\left(\frac{\frac{\mu_{1,v}}{\lambda_{1,v}}\cdot\frac{1-\alpha}{\alpha}}{1+ \frac{\mu_{1,v}}{\lambda_{1,v}}\cdot\frac{1-\alpha}{\alpha}}\lambda_{1,v}+\frac{1}{1+ \frac{\mu_{1,v}}{\lambda_{1,v}}\cdot\frac{1-\alpha}{\alpha}} \mu_{1,v}\right)\left((1-\alpha)p^0+\alpha p^1\right)\\
    &=\frac{\frac{\mu_{1,v}}{\lambda_{1,v}}\cdot\frac{1-\alpha}{\alpha}}{1+ \frac{\mu_{1,v}}{\lambda_{1,v}}\cdot\frac{1-\alpha}{\alpha}} \left(\lambda_{1,v}(1-\alpha)+\frac{\lambda_{1,v}}{\mu_{1,v}}\cdot\frac{\alpha}{1-\alpha}\cdot\mu_{1,v}(1-\alpha)\right)p^0 \\ 
    \quad &+\frac{1}{1+ \frac{\mu_{1,v}}{\lambda_{1,v}}\cdot\frac{1-\alpha}{\alpha}} \left(\frac{\mu_{1,v}}{\lambda_{1,v}}\cdot\frac{1-\alpha}{\alpha}\cdot\lambda_{1,v}\alpha+\mu_{1,v}\alpha\right)p^1
    \\ 
    &=(1-\beta)\lambda_{1,v}p^0 + \beta \mu_{1,v}p^1
\end{align*}
    This proves \eqref{eq:e1} and therefore that $e+rC\subset K_{\alpha}$, which implies $r(K_{\alpha},C)\geq r$.
\end{proof}

We are not restricted to the case where the inradii of $K_0$ and $K_1$ coincide. \Cref{lem:quasiconcsegment} shows that the smallest inradius is attained at the boundary of the segment $[p^0,p^1]$. 

\begin{lemma}
    \label{lem:quasiconcsegment}
    Let $C\in\CK^n$ be full-dimensional and $p^0,p^1,\ldots,p^{n+1}\in\R^n$ such that $p^2,\ldots,p^{n+1}$ are affinely independent and $p^0$ and $p^1$ lie in the same open half-space bounded by the hyperplane $\aff\left(\set{p^2,\ldots,p^{n+1}}\right)$. 
Again, define $K_\alpha:=\conv\left(\set{(1-\alpha)p^0 +\alpha p^1,p^2,\ldots,p^{n+1}}\right)$ 
for $\alpha\in[0,1]$. Then, 
\begin{equation*}
    r(K_\alpha,C)\geq \min(\set{r(K_0,C),r(K_1,C)}).
\end{equation*}
\end{lemma}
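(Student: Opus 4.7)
The plan is to reduce to \Cref{lem:quasiconc} via a continuity plus intermediate-value argument. If $p^0=p^1$, the family $\set{K_\alpha}$ is constant and the claim is trivial, so assume $p^0\neq p^1$. The map $h(\alpha):=r(K_\alpha,C)$ is continuous on $[0,1]$: the apex $(1-\alpha)p^0+\alpha p^1$ depends affinely on $\alpha$, hence $K_\alpha$ varies continuously in the Hausdorff metric, and $r(\,\cdot\,,C)$ is continuous in that metric when $C$ is full-dimensional. Moreover each apex lies in the open half-space bounded by $\aff(\set{p^2,\ldots,p^{n+1}})$ (being a convex combination of two points there), so each $K_\alpha$ is a full-dimensional simplex and $h(\alpha)>0$.

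I would then argue by contradiction. Suppose $m:=\min_{\alpha\in[0,1]}h(\alpha)<\min\set{h(0),h(1)}$, and let $\alpha^*\in(0,1)$ attain the minimum. Pick any $m'\in(m,\min\set{h(0),h(1)})$. The intermediate value theorem, applied separately on $[0,\alpha^*]$ and $[\alpha^*,1]$, yields $\alpha_1\in[0,\alpha^*)$ and $\alpha_2\in(\alpha^*,1]$ with $h(\alpha_1)=h(\alpha_2)=m'$. Now apply \Cref{lem:quasiconc} to the subfamily with base $\set{p^2,\ldots,p^{n+1}}$ and apex endpoints $q^0:=(1-\alpha_1)p^0+\alpha_1 p^1$, $q^1:=(1-\alpha_2)p^0+\alpha_2 p^1$ (both in the required open half-space, as they are convex combinations of $p^0,p^1$). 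A direct computation using $(1-\beta)+\beta=1$ gives
\[
(1-\beta)q^0+\beta q^1=(1-\gamma)p^0+\gamma p^1,\qquad \gamma:=(1-\beta)\alpha_1+\beta\alpha_2,
\]
so this subfamily is exactly $\set{K_\gamma:\gamma\in[\alpha_1,\alpha_2]}$. Since the inradii at its two endpoints both equal $m'$, \Cref{lem:quasiconc} forces $h(\gamma)\geq m'$ on all of $[\alpha_1,\alpha_2]$. In particular $h(\alpha^*)\geq m'>m=h(\alpha^*)$, a contradiction.

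I do not anticipate a serious obstacle. The main conceptual step is recognizing that \Cref{lem:quasiconc} is already strong enough once one can find, around any candidate interior minimizer, two apex positions on $[p^0,p^1]$ with equal inradius; the only small technical point is the affine reparametrization $\beta\mapsto\gamma$ above, which verifies that the interpolating subfamily sits compatibly inside the original family. The continuity of $r(\,\cdot\,,C)$ in the Hausdorff metric is standard for full-dimensional gauges and can be invoked without further comment.
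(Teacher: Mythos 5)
Your proof is correct and follows essentially the same route as the paper: continuity of $\alpha\mapsto r(K_\alpha,C)$ in the Hausdorff metric, an intermediate-value argument around a hypothetical interior minimizer, and a reduction to \Cref{lem:quasiconc} applied to a subsegment of $[p^0,p^1]$ whose endpoint simplices have equal inradius. The paper simply keeps $p^1$ as one endpoint and uses the level $r(K_1,C)$ with a single IVT application, whereas you use an intermediate level $m'$ on both sides of the minimizer; this is an inessential variation.
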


\begin{proof}
 By continuity of the inradius with respect to the Hausdorff norm, the mapping from $\alpha$ to the inradius of $K_\alpha$ is continuous. Assume, there is an $\alpha^*\in[0,1]$ such that $r(K_{\alpha^*},C)<\min(\set{r(K_0,C),r(K_1,C)})$ and without loss of generality that $r(K_0,C)\geq r(K_1,C)$. Then there exists an $\alpha\in[0,\alpha^*]$ such that $r(K_{\alpha},C)=r(K_1,C)$, and we can apply \Cref{lem:quasiconc} to obtain $r(K_{\alpha^{*}},C)\geq r(K_1,C)$, a contradiction. 
\end{proof}
\begin{proof}[Proof of \Cref{thm:minquasiconc}]
Consider any $p\in P$ which minimizes the inradius of these simplices and that has the property that its face\footnote{{See, e.g., \cite{Schneider93} for the theory of faces for general convex bodies.}} $F(p)$ is of minimal dimension over the set of such points. If $p$ is extreme, there is nothing to show. Otherwise, let $p^0$ be any extreme point of $F(p)$ and $p^1 \in \relbd(F(p))$ such that $p \in [p^0,p^1]$. From \Cref{lem:quasiconcsegment} we now obtain that at least one of the two simplices $S_{p^i}, i=0,1$ fulfills $r(S_{p^i},C) = r(S_p,C)$, in contradiction to our assumption that $F(p)$ is a minimal dimensional  with this property. 
\end{proof}
To obtain geometric inequalities with the help of \Cref{thm:minquasiconc}, we will compare simplices that share a facet and are optimally contained in $C$. 

In general, the convex hull $P$ of points on the boundary of $C$, our choices for the changing vertex, may not belong to the boundary of $C$ itself. \Cref{cor:bdarea} below reveals that we can compare the inradii if we choose 
the last vertex on a part of the boundary of $C$ which lies in the projection of this convex set $P$ onto the boundary with a center of projection being a vertex of the shared facet of the simplices (\cf~\Cref{fig:projections}). 
For $\set{q^1,\ldots,q^k}\subset\bd(C)$ and $P:=\set{p^1,\ldots,p^m}\subset\bd(C)$  we say that $q$ belongs to the \cemph{red}{$C,P$-convex hull} of $\set{q^1,\ldots,q^k}$, which we denote by $\conv_{C,P}\left(\set{q^1,\ldots,q^k}\right)$, if 
\begin{equation*}
    q \in \conv \left( \bigcup_{j=1}^{m} \set{ p^j+\sum_{i=1}^{k} \alpha_i(q^i-p^j ):\alpha_i\geq 0, i\in[k], \sum_{i=1}^k \alpha_i\geq 1 }\right) \cap \bd(C). 
\end{equation*}
If $P$ consists of a single point $p$, we abbreviate $\conv_{C,\set{p}} =: \conv_{C,p}$.

\begin{figure}[ht]
    \centering
    \tdplotsetmaincoords{0}{0}
\begin{tikzpicture}[tdplot_main_coords, scale=3]

\def\R{1}
\def\D{1.66}

\coordinate (p3) at (-0.926, 0.3778, 0); 
\coordinate (p4) at (0, -1, 0);
\coordinate (p3m)  at (0.926, 0.3778, 0);
\coordinate(z) at (0,0,0);

\shade[ball color=blue!10, opacity=0.2] (0,0,0) circle (\R);

 \tkzDefPointBy[reflection = over p3--z](p4)
 \tkzGetPoint{p4m}

  \tkzDefMidPoint(p3,p3m)
 \tkzGetPoint{m3}

  \tkzDefMidPoint(p4,p4m)
 \tkzGetPoint{m4}
\path[name path=e1](m3) ellipse[x radius=0.926, y radius=0.15];

\path[name path=e2, rotate around={68:(m4)}] (m4) ellipse[x radius=0.926, y radius=0.15];
 
\path[name intersections={of=e1 and e2, by={i1,i2,i3,i4}}];

 \tkzInterLL(p3,p3m)(p4,p4m)
 \tkzGetPoint{p1s}
\tkzDrawSegment[dotted](p4,p3)
\tkzDrawSegment[dotted](i2,p3)
\tkzDrawSegment[dotted](p4,i2)
\tkzDrawSegment[dotted,dred](p4,i3)
\tkzDrawSegment[dotted,dblue](i3,p3)
\tkzDrawSegment[dotted,dgreen](i2,i3)

\tkzDefPointOnLine[pos=0.45](p4,p3)
 \tkzGetPoint{m5}
\path[name path=e3] (m5) circle[radius=1];
\path[name intersections={of=e3 and e1, by={h1,h2,h3,h4}}];
\path[name path=e3, rotate around={292:(m5)}] (m5) ellipse[x radius=0.7, y radius=0.7];

\path[name intersections={of=e3 and e2, by={k1,k2,k3,k4}}];
\path[name intersections={of=e3 and e1, by={h1,h2,h3,h4}}];
\tkzDrawPoint(h2)
\tkzDrawSegment[dotted,dred](p4,h2)
\tkzDrawSegment[dotted,dblue](h2,p3)
\tkzDrawSegment[dotted,dgreen](i2,h2)



\filldraw (p3) circle (0.5pt) node[above left] {$p^3$};
\filldraw(p4) circle (0.5pt) node[below] {$p^4$};    
\filldraw(i2) circle (0.5pt) node[above right] {$p^2$};



\draw[thick, dblue]    (h2) to[out=-5,in=-175] (i3);
\draw[thick, dgreen]    (h2) to[out=-10,in=-170] (i3);
\draw[thick, dred]   (h2) to[out=10,in=170] (i3);
\filldraw(i3) circle (0.5pt) node[below right] {$q^2$};
\tkzDrawPoint(h2)
\tkzLabelPoint[below left](h2){$q^1$}
\end{tikzpicture}
\caption{The $C,p^i$-convex hulls of $\set{q^1,q^2}$
, $i=2,3,4$, are depicted by the three thick colored lines.}
\label{fig:projections}
\end{figure}
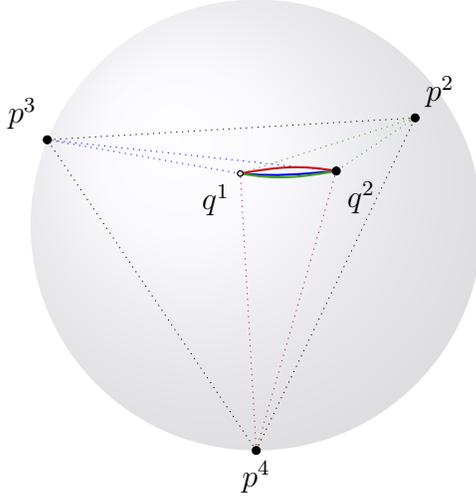
The following lemma and the last part of \Cref{cor:bdarea} play a key role in proving the equality case of \eqref{eq:NewInequality}. 
\begin{lemma}
    \label{lem:inballsimplex}
    Let $C\in\CK^n$ be full-dimensional and smooth, and let $S\in\CK^n$ be a full-dimensional simplex. Then, for any $K\in\CK^n$ with $S\subsetneq K$ we have $r(S,C)<r(K,C)$.
\end{lemma}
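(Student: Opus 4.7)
The plan is a proof by contradiction: assume $r(S,C)=r(K,C)=:r$ and conclude $S=K$. The engine is \Cref{prop:opt} applied twice to the inball $c_S+rC$ of $S$, combined with the fact that the facet normals $a^1,\ldots,a^{n+1}$ of the simplex $S$ are affinely independent and admit an essentially unique positive dependence relation. Both properties follow from standard observations: the $a^i$ must span $\R^n$, lest $S$ be unbounded in the orthogonal direction, which gives that the kernel of $(c_1,\ldots,c_{n+1})\mapsto\sum c_i a^i$ is one-dimensional; and a generator has all entries of the same sign. Consequently, $0\in\conv\set{a^{i_1},\ldots,a^{i_k}}$ with $k\leq n+1$ can hold only when $\set{i_1,\ldots,i_k}=\set{1,\ldots,n+1}$.

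I would first apply \Cref{prop:opt} to $c_S+rC\optc S$: together with the uniqueness just mentioned, this forces the inball to touch every facet $F_i$ at some contact point $p^i$. Smoothness of $C$ makes the outer normal of $c_S+rC$ at $p^i$ unique, which rules out $p^i\in F_i\cap F_j$ for $j\neq i$ (two distinct facet hyperplanes would both support $c_S+rC$ at $p^i$), so in fact $p^i\in\relint(F_i)$. Now since $c_S+rC\subset S\subset K$ and $r(K,C)=r$, we also have $c_S+rC\optc K$. A second application of \Cref{prop:opt} produces contact points $q^1,\ldots,q^k\in\bd(c_S+rC)\cap\bd(K)$ with outer normals $b^j\in\ext(K^\circ)\setminus\set{0}$, affinely independent, and $0\in\conv\set{b^j}$.

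To close the contradiction, I argue that each $q^j$ equals some $p^{i_j}$: $q^j\in c_S+rC\subset S$ together with $q^j\in\bd(K)$ forces $q^j\in\bd(S)\cap\bd(c_S+rC)=\set{p^1,\ldots,p^{n+1}}$, since $\inte(S)\subset\inte(K)$. Any supporting hyperplane of $K$ at $p^{i_j}$ also supports $c_S+rC$ there, so smoothness of $C$ pins $b^j$ down to a positive multiple of $a^{i_j}$. Then $0\in\conv\set{b^j}$ yields a nonnegative dependence among $\set{a^{i_j}}$ which, extended by zeros to all $n+1$ indices, must be a positive multiple of the unique positive dependence; hence $\set{i_1,\ldots,i_k}=\set{1,\ldots,n+1}$, every $p^i$ lies in $\bd(K)$, and $K\subset\bigcap_{i=1}^{n+1}H^\leq_{(a^i,a^i\cdot p^i)}=S$, contradicting $S\subsetneq K$. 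The main obstacle is this last bookkeeping: the chain $q^j\in\bd(K)\Rightarrow q^j=p^{i_j}\Rightarrow b^j\parallel a^{i_j}$ uses smoothness of $C$ at two separate places, and simplex rigidity of the normals is what upgrades the weak contact information into the strong conclusion $K\subset S$. Without smoothness the argument collapses, since $c_S+rC$ could admit multiple supporting directions at a contact point and $b^j$ could wander through a higher-dimensional normal cone.
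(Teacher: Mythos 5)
Your proof is correct and is essentially the paper's own argument in expanded form: both apply \Cref{prop:opt} twice to the inball of $S$ (once inside $S$, once inside $K$), use smoothness to place every contact point in the relative interior of a single facet and to identify the supporting normals of $K$ at such points with facet normals of $S$, and invoke the rigidity fact that $0$ lies in the convex hull of no proper subset of the facet normals of a simplex --- the paper merely phrases the endgame contrapositively (since $S\subsetneq K$, some facet hyperplane of $S$ fails to support $K$, so the optimality criterion for the inball inside $K$ cannot be satisfied). The only slight overstatement is the equality $\bd(S)\cap\bd(c_S+rC)=\set{p^1,\ldots,p^{n+1}}$, which would require strict convexity rather than smoothness (contact sets may be higher-dimensional); but this is harmless, since your argument only uses that each $q^j$ lies on exactly one facet of $S$ whose hyperplane is the unique support of the inball there, and smoothness does deliver that.
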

\begin{proof}
    Due to \Cref{prop:opt}, the inball of a simplex $S$ must always touch all facets of $S$ and in case the inball is smooth it can only touch the facets in their relative interior.
    However, at least one facet-defining hyperplane of $S$ does not support $K$, which, again by to \Cref{prop:opt}, immediately shows that the inball of $S$ cannot be optimally contained in $K$, 
    implying $r(S,C)<r(K,C)$.
\end{proof}

\begin{corollary}  \label{cor:bdarea}
Let $C\in\CK^n$ be full-dimensional and $\set{p^2,\ldots,p^{n+1}}\subset \bd(C)$ be affinely independent. Furthermore, let $\set{q^1,\ldots,q^k}\subset \bd(C)$ be contained in the same open half-space bounded by the hyperplane $\aff\left(\set{p^2,\ldots,p^{n+1}}\right)$. Then, for $S:=\conv\left(\set{p^1,p^2,\ldots,p^{n+1}}\right)$ with $p^1 \in \conv_{C,\set{p^2,...p^{n+1}}}$ and $S_i:=\conv\left(\set{q^i,p^2,\ldots,p^{n+1}}\right)$
we have 
\begin{equation*}
   r(S,C)\geq \min_{i\in[k]} r(S_i,C).
\end{equation*}
Moreover, if $C$ is smooth and $p^1$ is not contained in $\conv\left(\set{q^1,\ldots,q^k}\right)$, we have $r(S,C)>\min_{i\in[k]} r(S_i,C) $.
\end{corollary}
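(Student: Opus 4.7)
The plan is to reduce \Cref{cor:bdarea} to two nested applications of \Cref{thm:minquasiconc}, together with \Cref{lem:inballsimplex} for the strict part. First, I would unpack $p^1 \in \conv_{C,\set{p^2,\ldots,p^{n+1}}}(\set{q^1,\ldots,q^k})$ as a convex combination $p^1 = \sum_{j=2}^{n+1} \gamma_j x_j$ in which every $x_j = (1-t_j)p^j + t_j q_j$ with $t_j \geq 1$ and $q_j \in \conv(\set{q^1,\ldots,q^k})$. Let $H := \aff(\set{p^2,\ldots,p^{n+1}})$ and let $h$ be a linear functional vanishing on $H$ that is positive on the open half-space containing the $q^i$. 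Then $h(x_j) = t_j h(q_j) > 0$, so $P := \conv(\set{x_2,\ldots,x_{n+1}})$ lies in this open half-space.

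For the weak inequality, applying \Cref{thm:minquasiconc} to $P$ yields an extreme point $x_{j_0}$ with $r(S,C) \geq r(\conv(\set{x_{j_0}, p^2,\ldots,p^{n+1}}),C)$, since $p^1 \in P$. Since $q_{j_0} = (1/t_{j_0}) x_{j_0} + (1-1/t_{j_0}) p^{j_0}$ lies on the segment connecting two vertices of $\conv(\set{x_{j_0}, p^2,\ldots,p^{n+1}})$, the simplex $\conv(\set{q_{j_0}, p^2,\ldots,p^{n+1}})$ is contained in it, and inradius monotonicity yields $r(\conv(\set{x_{j_0}, p^2,\ldots,p^{n+1}}),C) \geq r(\conv(\set{q_{j_0}, p^2,\ldots,p^{n+1}}),C)$. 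A second use of \Cref{thm:minquasiconc}, applied to $\conv(\set{q^1,\ldots,q^k})$ whose extreme points lie in $\set{q^1,\ldots,q^k}$, bounds the right-hand side below by $\min_{i \in [k]} r(S_i,C)$, proving the weak part.

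For the strict inequality, with $C$ smooth and $p^1 \notin \conv(\set{q^1,\ldots,q^k})$, I would produce a witness $q^* \in S \cap \conv(\set{q^i})$ strictly below $p^1$. Set $\sigma := \sum_j \gamma_j(t_j-1)$; the hypothesis forces $\sigma > 0$, for otherwise $t_j = 1$ whenever $\gamma_j > 0$, giving $p^1 = \sum_j \gamma_j q_j \in \conv(\set{q^i})$. Rearranging $p^1 = \sum_j \gamma_j(1-t_j) p^j + \sum_j \gamma_j t_j q_j$ yields
\[
q^* \ :=\ \frac{1}{1+\sigma}\, p^1 + \sum_j \frac{\gamma_j(t_j-1)}{1+\sigma}\, p^j \ =\ \sum_j \frac{\gamma_j t_j}{1+\sigma}\, q_j,
\]
simultaneously a convex combination of $\set{p^1,p^2,\ldots,p^{n+1}}$ and of the $q_j$, so $q^* \in S \cap \conv(\set{q^i})$. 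Evaluating $h$ gives $h(q^*) = h(p^1)/(1+\sigma) < h(p^1)$, hence $q^* \neq p^1$ and $\conv(\set{q^*, p^2,\ldots,p^{n+1}})$ is strictly contained in $S$ (every point of the smaller simplex has $h$-value at most $h(q^*)$, while $h(p^1)$ strictly exceeds this). Smoothness and \Cref{lem:inballsimplex} then give $r(\conv(\set{q^*,p^2,\ldots,p^{n+1}}),C) < r(S,C)$, while \Cref{thm:minquasiconc} applied once more to $\conv(\set{q^i})$ bounds this below by $\min_i r(S_i,C)$, yielding the strict inequality. The main obstacle I anticipate is this last step: the weak inequality falls out from a clean chain of quasiconcavity estimates, but the strict version forces one to exhibit an explicit witness inside $\conv(\set{q^i}) \cap S$ with a genuinely lower $h$-coordinate, and it is the rearrangement producing $q^*$ — together with the check that $\sigma > 0$ — that does the real work.
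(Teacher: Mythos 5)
Your proposal is correct and in essence follows the paper's own route: the witness $q^*$ in your strict case is exactly the point $q\in S\cap\conv\left(\set{q^1,\ldots,q^k}\right)$ that the paper constructs from the $\conv_{C,P}$-representation of $p^1$ (your factor $1+\sigma$ is precisely its normalizing denominator $\sum_j\lambda_j\sum_i\alpha_{i,j}$), and both arguments then conclude via \Cref{thm:minquasiconc} together with \Cref{lem:inballsimplex}. The only deviations are minor and harmless: for the weak inequality you apply \Cref{thm:minquasiconc} twice, first over $\conv\left(\set{x_2,\ldots,x_{n+1}}\right)$ and then over $\conv\left(\set{q^1,\ldots,q^k}\right)$, instead of reusing the witness point as the paper does, and your separating functional $h$ should be taken affine rather than linear unless $0\in\aff\left(\set{p^2,\ldots,p^{n+1}}\right)$.
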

\begin{proof}
    Essentially, it suffices to show that there exists some 
    $q\in \conv\left(\set{q^1,\ldots,q^k}\right)\cap S$. 
    Given such a $q$, it is obvious
    that $r(S,C)\geq r(\conv\left(\set{q,p^2,\ldots,p^{n+1}}\right),C)$ and 
    $q \in \conv\left(\set{q^1,\ldots,q^k}\right)$ enables us to apply \Cref{thm:minquasiconc} to obtain $r(\conv\left(\set{q,p^2,\ldots,p^{n+1}}\right),C)\geq \min_{i\in[k]} r(S_i,C)$. 
     Note that we have $r(S,C)>r(\conv\left(\set{q,p^2,\ldots,p^{n+1}}\right),C)$  if $C$ is smooth and  $q\in\inte(C)$ by \Cref{lem:inballsimplex}. 

   According to our choice of $p^1$ there exist $\lambda_j, \alpha_{i,j}\geq0$, $j\in\set{2,\ldots,,n+1}$ and $i\in[k]$, 
   with $\sum_{j=2}^{n+1}\lambda_j=1$   and $\sum_{i=1}^{k}\alpha_{i,j}\geq 1$ such that 
   \begin{equation*}
       p^1=\sum_{j=2}^{n+1} \lambda_j \left( p^j + \sum_{i=1}^k \alpha_{i,j}(q^i-p^j) \right) \in \bd(C). 
   \end{equation*}
We define
\begin{align*}
\beta_i &:= \frac{\sum_{j=2}^{n+1}\lambda_j \alpha_{i,j}}{\sum_{m=2}^{n+1}\lambda_m \sum_{l=1}^k \alpha_{l,j} }, \quad i \in [k] \\
\mu_1&:=\frac{1}{\sum_{m=2}^{n+1}\lambda_m \sum_{l=1}^k \alpha_{l,j} },  \\
\mu_j&:=\frac{\lambda_j\left(\sum_{i=1}^{k} \alpha_{i,j}-1\right)}{\sum_{m=2}^{n+1}\lambda_m \sum_{l=1}^k \alpha_{l,j} }, \quad j \in \set{2,\ldots,n+1}. 
\end{align*}
Then, we have $\beta_i, \mu_j\in[0,1]$ for $i\in[k]$ and $j\in\set{2,\ldots,n+1}$ and $\sum_{i=1}^k \beta_i=\sum_{j=1}^{n+1}\mu_j=1$. 

Moreover, 
\begin{align*}
   q:&=\sum_{j=1}^{n+1}\mu_j p^j= \mu_1 \left(\sum_{j=2}^{n+1}\lambda_j\left(p^j+\sum_{i=1}^k \alpha_{i,j}(q^i-p^j) \right)\right) +\sum_{j=2}^{n+1}\mu_j p^j\\ 
    &= \mu_1\left(\sum_{j=2}^{n+1}\lambda_j\sum_{i=1}^k \alpha_{i,j}q^i \right)+ \sum_{j=2}^{n+1} \left(\mu_1\lambda_j\left(1-\sum_{i=1}^{k}\alpha_{i,j}\right)+\mu_j\right)p^j\\
    &=\sum_{i=1}^k \beta_i q^i \in \conv\left(\set{q^1,\ldots,q^k}\right), 
\end{align*}
 concluding the proof.
\end{proof}

\section{Proof of the main result}
\label{sec:euclidean}
To prove Theorem \ref{thm:fulldiagram}, we need to show \eqref{eq:NewInequality}. To do so, we aim to minimize the inradius, given a fixed diameter and circumradius. 
For simplicity, we abbreviate $\B:=\B_2^3$ for the 3-dimensional Euclidean ball and write $\S:=\bd(\B)$ for the corresponding sphere.
\begin{remark}
    \label{rem:sqrt3}
     We know that for 
$D \in [\sqrt{3},2]$, there exist planar convex sets $K\in\CK^3$ with $D(K)=D$, $R(K)=1$ and $r(K)=0$ \cite{santalo} and from \eqref{eq:jungeucl} that $D(K)\geq \sqrt{\frac{8}{3}}$ for all $K\in\CK^3$ with $R(K)=1$.  Furthermore, if $S:=\conv(\set{p^1,p^2,p^3,p^4})\optc \B$ with 
$D(S)<\sqrt{3}$ then, by \Cref{rem:opteuclidean} and \eqref{eq:jungeucl}  (applied for the planar case), $p^1,p^2,p^3,p^4$ are affinely independent and contained in $\S$.
\end{remark}

In the following, we fix a diameter $D\in\left[\sqrt{\tfrac{8}{3}},\sqrt{3}\right)$ and find the smallest inradius a full-dimensional simplex with this diameter and circumradius 1 can have. Later, we will show why it suffices to consider simplices.

Let $S:=\conv(\set{p^1,p^2,p^3,p^4})\optc \B$ with 
$D(S)=D$. Then, $\set{p^1,p^2,p^3,p^4}\subset \S$ by \Cref{rem:opteuclidean}. Since a simplex attains its diameter with one of its edges and $\B$ is invariant under rotations, we may assume

\begin{align}
\label{eq:p3p4def}
   p^3=\begin{pmatrix}
       -\sqrt{D^2- \frac{D^4}{4}} \\ \tfrac{D^2}{2}-1 \\0
    \end{pmatrix} \quad \text{and} \quad
    p^4=\begin{pmatrix}
        0 \\ -1 \\0
    \end{pmatrix}.
\end{align}
We define the small circles 
\begin{equation*}
    \Gamma_i:=\set{x\in \S: \norm{x-p^i}=D}, \quad i\in[4].
\end{equation*}

Note that for $p\in\S$,
\begin{equation}
\label{eq:distancehalf-space}
\begin{split}
      \set{x\in\S: \norm{x-p} =D}=\S \cap H^{ =}_{\left(-p,\frac{D^2}{2}-1\right)}
\end{split}
\end{equation}
and analogously for "$\leq$" and "$\geq$".

\begin{remark}
    \label{rem:p3p4}
    The small circles $\Gamma_3$ and $\Gamma_4$ intersect in two points, namely
\begin{align*}
    p^1_*:=\begin{pmatrix}
        \frac{D^3-2D}{2\sqrt{4-D^2}}\\ \frac{D^2}{2}-1 \\ \sqrt{1- \frac{(D^3-2D)^2}{4(4-D^2)}-\left(\frac{D^2}{2}-1\right)^2}
    \end{pmatrix} \quad \text{and} \quad
     p^2_*:=\begin{pmatrix}
        \frac{D^3-2D}{2\sqrt{4-D^2}}\\ \frac{D^2}{2}-1 \\ -\sqrt{1- \frac{(D^3-2D)^2}{4(4-D^2)}-\left(\frac{D^2}{2}-1\right)^2}
    \end{pmatrix}
\end{align*}
(c.f.~\Cref{fig:eucl-basic}). The points coincide if and only if $D=\sqrt{3}$. 
Moreover, keeping $p^4$ and investigating the path $p^3$ would take under the above definition when reducing the diameter", with the second coordinate of $p^3$ being $y\leq\frac{D^2}{2}-1$, the first coordinate of the intersection points would 
become $\sqrt{\frac{1+y}{1-y}}\left(\frac{D^2}{2}-1\right)$. 
This expression is increasing in $y$,
which means that the two intersection points are maintained when $p^3$ is moved towards $p^4$.
\end{remark}

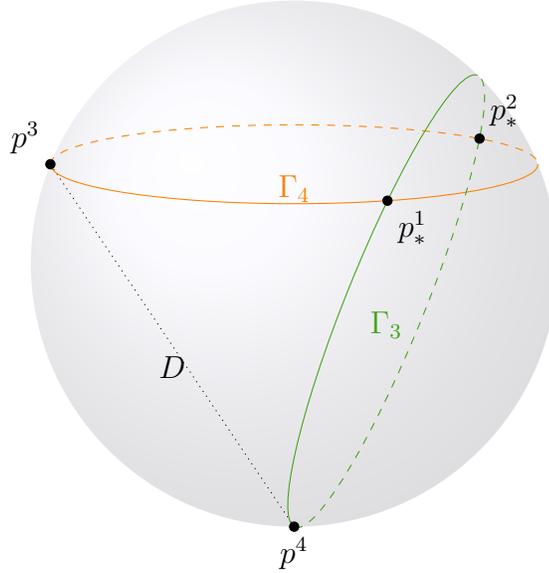
\begin{figure}
    \centering

    \tdplotsetmaincoords{0}{0}
\begin{tikzpicture}[tdplot_main_coords, scale=3.5]

\def\R{1}
\def\D{1.66}

\coordinate (p3) at (-0.926, 0.3778, 0); 
\coordinate (p4) at (0, -1, 0);
\coordinate (p3m)  at (0.926, 0.3778, 0);
\coordinate(z) at (0,0,0);

\shade[ball color=blue!10, opacity=0.2] (0,0,0) circle (\R);

 \tkzDefPointBy[reflection = over p3--z](p4)
 \tkzGetPoint{p4m}

  \tkzDefMidPoint(p3,p3m)
 \tkzGetPoint{m3}

  \tkzDefMidPoint(p4,p4m)
 \tkzGetPoint{m4}
\path[name path=e1](m3) ellipse[x radius=0.926, y radius=0.15];

\path[name path=e2, rotate around={68:(m4)}] (m4) ellipse[x radius=0.926, y radius=0.15];
 
\path[name intersections={of=e1 and e2, by={i1,i2,i3,i4}}];
\tkzDrawSegment[dotted](p4,p3)
\tkzLabelSegment[orange](p3,p3m){$\Gamma_4$}
\tkzLabelSegment[dgreen, below](p4,p4m){$\Gamma_3$}
\tkzLabelSegment(p4,p3){$D$}
 \tkzInterLL(p3,p3m)(p4,p4m)
 \tkzGetPoint{p1s}

\draw[orange, dashed] ([shift={(-0.926,0)}]m3) arc[start angle=180, end angle=0, x radius=0.926, y radius=0.15];
\draw[orange] ([shift={(-0.926,0)}]m3) arc[start angle=180, end angle=360, x radius=0.926, y radius=0.15];
\draw[dgreen, dashed, rotate around={68:(m4)}] ([shift={(-0.926,0)}]m4)arc[start angle=180, end angle=360, x radius=0.926, y radius=0.15];
\draw[dgreen,rotate around={68:(m4)}] ([shift={(0.926,0)}]m4)arc[start angle=0, end angle=180, x radius=0.926, y radius=0.15];

\filldraw (p3) circle (0.5pt) node[above left] {$p^3$};
\filldraw(p4) circle (0.5pt) node[below] {$p^4$};    
\filldraw(i2) circle (0.5pt) node[above right] {$p^2_*$}; 

\filldraw(i3) circle (0.5pt) node[below right] {$p^1_*$}; 
\end{tikzpicture}

    \caption{The basic configuration: $p^3$ and $p^4$ are fixed to have distance $D$ and the small circles $\Gamma_3$ and $\Gamma_4$ contain the points of the sphere having distance $D$ to $p^3$ and $p^4$, respectively. 
    }
    \label{fig:eucl-basic}
\end{figure}

Now we consider the remaining two vertices $p^1$ and $p^2$. By \Cref{rem:opteuclidean}, we may assume $p^2_3<0$ and $p^1_3>0$. Otherwise, $D$ is at least the diameter of a planar set, contradicting $D<\sqrt{3}$. Thus, $p^2\in \set{x\in\S: \norm{x-p^3}\leq D, \norm{x-p^4}\leq D, x_3<0}$ and $p^1\in \set{x\in\S: \norm{x-p^2}\leq D,\norm{x-p^3}\leq D, \norm{x-p^4}\leq D, x_3>0}$. 
Additionaly, since the simplex needs to be optimally contained in $\B$, 
we also have $p^1\in \set{x\in \S:x\in \pos(\set{-p^2,-p^3,-p^4})}$ by \Cref{prop:opt}. The following lemma describes the topology of the spherical region in which $p^1$ can be located.  

\begin{lemma}
\label{lem:defT}
Let $D\in\left[\sqrt{\frac{8}{3}},\sqrt{3}\right)$ and $S:=\conv\left(\set{p^1,p^2,p^3,p^4}\right)\optc \B$ such that $D = D(S)=\norm{p^3-p^4}$. Moreover, define 
\begin{align*}
        T_1:&=\set{x\in \S: \norm{x-p^2},\norm{x-p^3},\norm{x-p^4}\leq D} \text{ and}\\
        T_2:&=\set{x\in \R^3:x\in \pos(\set{-p^2,-p^3,-p^4})}.
    \end{align*}
Then, $T:=T_1\cap T_2$
    is a simply connected subset of the sphere bounded by three small circles.
\end{lemma}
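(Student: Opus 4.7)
The plan is to re-express each defining inequality of $T = T_1 \cap T_2$ as a spherical half-space, show that of the six resulting constraints only three are active on $\bd(T)$, and then invoke a geodesic-convexity argument on the sphere for simple connectivity.

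I would begin by rewriting the constraints in coordinate-free form. For $x,p^i\in\S$ the identity $\norm{x-p^i}^2=2-2x^\top p^i$ turns $\norm{x-p^i}\leq D$ into $x^\top p^i\geq 1-\tfrac{D^2}{2}$, which cuts $\S$ along the small circle $\Gamma_i$, so $T_1\cap\S$ is an intersection of three spherical caps bounded by $\Gamma_2,\Gamma_3,\Gamma_4$. The convex cone $T_2=\pos(\set{-p^2,-p^3,-p^4})$ is the intersection of three linear half-spaces $\set{x:n_{jk}^\top x\leq 0}$, where $n_{jk}$ is an outward normal of the face of $T_2$ opposite $-p^i$; each such boundary meets $\S$ in a great circle, which is a (degenerate) small circle. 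Hence $T$ is carved from $\S$ by six spherical half-spaces and is a priori bounded by arcs of at most six small circles.

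The core step is to identify which three of these constraints actually bound $T$. Using the explicit setup that $p^3,p^4$ lie in the equatorial plane, $p^2$ in the open lower hemisphere, and any $p^1\in T$ in the open upper hemisphere, together with the affine independence of $\set{p^2,p^3,p^4}$ provided by \Cref{rem:opteuclidean}, I expect the three cone-face constraints of $T_2$ to become redundant once the three diameter caps of $T_1$ are imposed. Concretely, I would fix $x\in T_1\cap\S$ with $x_3>0$, solve the $3\times 3$ linear system $x=-\lambda_2p^2-\lambda_3p^3-\lambda_4p^4$ by Cramer's rule, and verify nonnegativity of the resulting $\lambda_i$ from $x^\top p^i\geq 1-\tfrac{D^2}{2}$ and $\sqrt{8/3}\leq D<\sqrt{3}$. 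After this reduction, $\bd(T)$ consists of arcs of precisely the three small circles $\Gamma_2,\Gamma_3,\Gamma_4$.

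Finally, for simple connectivity I would observe that $T\subset T_2$ lies in the closed upper half-space and, under $p^1_3>0$, in the open upper hemisphere of $\S$. Within any open hemisphere each cap bounded by $\Gamma_i$ is geodesically convex, so their intersection is geodesically convex on $\S$ and therefore contractible, in particular simply connected. The main obstacle, and the technical heart of the lemma, will be verifying the sign conditions on the $\lambda_i$ uniformly over the range $D\in[\sqrt{8/3},\sqrt{3})$; this may require subcases depending on whether $D\lessgtr\sqrt{2}$, where the spherical caps transition between being smaller and larger than a hemisphere. If it turns out that one of $\Gamma_2,\Gamma_3,\Gamma_4$ is redundant instead of a cone face, the same geodesic-convexity argument still delivers the stated conclusion of three bounding small circles.
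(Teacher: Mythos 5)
Your core step is the claim that, in the chosen coordinates, the three cone-face constraints of $T_2$ are redundant, i.e.\ that every $x\in T_1$ with $x_3>0$ already lies in $T_2$; the Cramer's-rule verification you propose cannot work because this claim is false under the lemma's hypotheses. Take $D^2$ close to $3$, the isosceles configuration $p^1=p^1_*$, $p^2=p^2_*$ (all edges of length $D$ except $[p^1,p^2]$, and $0$ in the interior of $S$), and slide $p^2$ slightly from $p^2_*$ along $\Gamma_4$ towards $p^3$. Then still $\norm{p^2-p^4}=D$, now $\norm{p^2-p^3}<D$, $p^2_3<0$, $0$ remains interior, so $S\optc\B$ and $D(S)=\norm{p^3-p^4}=D$ persist. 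Since the tangent of $\Gamma_4$ at $p^3$ is vertical and $x^{\top}p^2$ decreases as one moves upwards along $\Gamma_4$, a whole sub-arc of $\Gamma_4$ just above $p^3$ (up to the nearby point of $\Gamma_2\cap\Gamma_4$) lies in $T_1$ and has $x_3>0$; but these points stay in a small neighbourhood of $p^3$, and $p^3\notin T_2$ (any representation $p^3=\lambda_2(-p^2)+\lambda_3(-p^3)+\lambda_4(-p^4)$ forces $\lambda_2=0$ via the third coordinate and is then impossible), so they are not in the closed cone $T_2$. Hence $T_1\cap\set{x_3>0}\not\subseteq T_2$ and your reduction misidentifies $T$. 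What is true — and is exactly what the paper proves, using $S\optc\B$ and $D<\sqrt3$ (a touching point on $\bd(T_2)$ would yield a triangle optimally contained in $\B$, forcing diameter at least $\sqrt3$) — is only the weaker statement $T_1\cap\bd(T_2)=\emptyset$, from which one must argue via connected components of $T_1$ (a component meeting $T_2$ cannot contain points with negative third coordinate), not via redundancy of the cone constraints.

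The concluding convexity step fails as well. Since $D\geq\sqrt{8/3}>\sqrt2$, one has $1-\tfrac{D^2}{2}<0$, so each constraint $\norm{x-p^i}\leq D$ defines a cap of angular radius larger than $90^{\circ}$ (the complement of an open cap of radius less than $90^{\circ}$ about $-p^i$); the case $D<\sqrt2$ you anticipate never occurs, and such caps are not geodesically convex, nor does intersecting with an open hemisphere repair this. Indeed $T$ itself is a spherical ``triangle'' whose edges are arcs of circles bounding the excluded caps and hence bend concavely towards $T$, so $T$ is not geodesically convex, and $T_1$ alone need not even be connected — the paper notes this explicitly. This is precisely why the paper's proof is a case analysis of the possible intersection patterns of $\Gamma_2,\Gamma_3,\Gamma_4$, excluding all but one pattern by means of the existence of $p^1\in T_1\cap T_2$, the fact $T_1\cap\bd(T_2)=\emptyset$, and $D<\sqrt3$, and concluding that $T$ is the single component bounded by arcs of the three small circles containing $p^1_*$; some global argument of this kind is needed in place of your geodesic-convexity step.
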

 Given the points $p^2, p^3, p^4$, the set $T_1$ describes the region of the sphere in which $x$ may be situated, such that $D(\conv(\set{x,p^2,p^3,p^4}))= D$. 
 Additionally, by \Cref{rem:sqrt3} we need $x\in T_2$ in order to have $\conv(\set{x,p^2,p^3,p^4})\optc \B$. 

One should recognize, that $T_1$ for itself might not be simply connected. 
\begin{proof}

Since $D(S)=D $, all points $p^1,p^2,p^3,p^4$ belong to $T_1$ and $p^1,p^2,p^3,p^4\in \S$ because $D < \sqrt{3}$, as explained in \Cref{rem:sqrt3}. Additionally, since $S\optc \B$, at least $p^1\in T_2$ and therefore $T\neq \emptyset$.

 
 W.l.o.g., we may assume that $p^3$ and $p^4$ are defined as in \eqref{eq:p3p4def} and that $p^2_3<0$. 

 Moreover, $ T_1 \cap \bd( T_2)  \neq \emptyset$ would imply that we may choose $x \in T$ such that the convex hull of three points out of $x,p^2,p^3,p^4$ is optimally contained in $\B$. In this case, the diameter of the according triangle would already be at least $\sqrt{3}$, contradicting $D(S) < \sqrt{3}$. Thus, $T_1\cap \bd (T_2)=\emptyset$. 
 Now, recognize that, by our assumptions on $p^2,p^3,p^4$ we have $x_3> 0$ for every $x\in \inte (T_2)$.

 For short, we say that two points in $T_1$ are \textit{connected} if they can be connected by a path within $T_1$. Thus, $T_1\cap \bd(T_2)=\emptyset$ implies that no point in $T_1\cap T_2$ can be connected with a point in $T_1$ with a negative third coordinate. We call this property (P1). 


Using the notation before the lemma, $\Gamma_3$ and $\Gamma_4$ intersect in the points $p^1_{*}$ and $p^2_{*}$. Moreover, by \Cref{rem:p3p4} 
both circles intersect $\Gamma_2$ in two points. In the following, we consider the cases of how the circles can intersect and show that all but the last one will lead to a contradiction.

  The two intersection points divide both circles, $\Gamma_3$ and $\Gamma_4$, into two parts, one with distance to $p^2$ at most $D$ and one with a larger distance than $D$. We distinguish between the cases $p^1_*\notin T_1$ and $p^1_* \in T_1$. 

\textit{Case 1:} First, assume $p^1_* \notin T_1$ (\cf~\Cref{fig:case21}), \ie~$\norm{p^1_*-p^2}>D$. 

\textit{Case 1.1:}
If points of both small circles, $\Gamma_3$ and $\Gamma_4$, belong to $T_1$ with negative third coordinate, every point in $T_1$ is connected to a point in $T_1$ with a negative third coordinate. By (P1) we obtain $T_1\cap T_2=\emptyset$,  contradicting $p^1\in T_1\cap T_2$.

\textit{Case 1.2:} Now, assume that neither $\Gamma_3\cap T_1$ nor $\Gamma_4\cap T_1$ contain points with a negative third coordinate. 
Since $p^3,p^4\in T_1$ and $p^3_3=p^4_3=0$ the points $p^3$ and $p^4$ need to be endpoints of $\Gamma_4\cap T_1$ and $\Gamma_3 \cap T_1$, respectively, which means they are the intersection points of $\Gamma_2$ with $\Gamma_4$ and $\Gamma_3$, respectively. Together, this would imply $p^2=p^2_*$ and because of $\norm{p^1_*-p^2_*}=D\sqrt{4-D^2-\frac{(D^2-2)^2}{4-D^2}}\leq D$ therefore $p^1_*\in T_1$, contradicting our assumption. 

\textit{Case 1.3:} 
Completing Case 1, consider the case that there belong points to one of the two small circles with negative third coordinate, but not both. W.l.o.g let $\Gamma_4$ be the one with such points. 
Then (as argued for Case 1.2), 
$p^4\in \Gamma_2$ is an endpoint of the arc of points in $\Gamma_3$ with distance at most $D$ to $p^2$.  
Moreover, $p^4\in \Gamma_2$ also implies 
$p^2\in \Gamma_4$. 
Since $p^2_3<0$, $p^2$ can only belong to the arc of $\Gamma_4$ between $p^3$ and $p^2_*$. In that case $\norm{p^2_*-p^2}<D$, which would imply $p^2_*\in T_1$.
This configuration cannot be achieved since all points belonging to the arc of $\Gamma_3$ within $T_1$ should only have non-negative third coordinate by our assumption. However, $p^2_*\in T_1\cap \Gamma_3$ and we have $(p^2_*)_3<0$. 


  \begin{figure}
\begin{minipage}{0.4\linewidth}
        \centering
    \tdplotsetmaincoords{0}{0}
\begin{tikzpicture}[tdplot_main_coords, scale=2.2]

\def\R{1}
\def\D{1.66}

\coordinate (p3) at (-0.926, 0.3778, 0); 
\coordinate (p4) at (0, -1, 0);
\coordinate (p3m)  at (0.926, 0.3778, 0);
\coordinate(z) at (0,0,0);

\shade[ball color=blue!10, opacity=0.2] (0,0,0) circle (\R);

 \tkzDefPointBy[reflection = over p3--z](p4)
 \tkzGetPoint{p4m}

  \tkzDefMidPoint(p3,p3m)
 \tkzGetPoint{m3}

  \tkzDefMidPoint(p4,p4m)
 \tkzGetPoint{m4}
\path[name path=e1](m3) ellipse[x radius=0.926, y radius=0.15];

\path[name path=e2, rotate around={68:(m4)}] (m4) ellipse[x radius=0.926, y radius=0.15];
 
\path[name intersections={of=e1 and e2, by={i1,i2,i3,i4}}];
\tkzLabelSegment[orange,above=1.5ex](p3,p3m){$\Gamma_4$}
\tkzLabelSegment[dgreen, below](p4,p4m){$\Gamma_3$}
 \tkzInterLL(p3,p3m)(p4,p4m)
 \tkzGetPoint{p1s}

\draw[orange, dashed] ([shift={(-0.926,0)}]m3) arc[start angle=180, end angle=0, x radius=0.926, y radius=0.15];
\draw[orange] ([shift={(-0.926,0)}]m3) arc[start angle=180, end angle=360, x radius=0.926, y radius=0.15];
\draw[dgreen, dashed, rotate around={68:(m4)}] ([shift={(-0.926,0)}]m4)arc[start angle=180, end angle=360, x radius=0.926, y radius=0.15];
\draw[dgreen,rotate around={68:(m4)}] ([shift={(0.926,0)}]m4)arc[start angle=0, end angle=180, x radius=0.926, y radius=0.15];

\def\xa{0.926}
\def\ya{0.15}

\def\startAngle{220}
\def\endAngle{180}

\path (m3) ++({\xa*cos(\startAngle)}, {\ya*sin(\startAngle)}) coordinate (A);
\path (m3) ++({\xa*cos(\endAngle)}, {\ya*sin(\endAngle)}) coordinate (B);
\draw[red, thick] (A) arc[start angle=\startAngle, end angle=\endAngle, x radius=\xa, y radius=\ya];

\def\theta{68}

\def\arcStartCD{140}
\def\arcEndCD{180}

\path (m4) ++({\xa*cos(\arcStartCD)}, {\ya*sin(\arcStartCD)}) coordinate (preC);
\path (m4) ++({\xa*cos(\arcEndCD)}, {\ya*sin(\arcEndCD)}) coordinate (preD);

\begin{scope}[rotate around={\theta:(m4)}]
  \draw[red, thick]
    ($(m4) + ({\xa*cos(\arcStartCD)}, {\ya*sin(\arcStartCD)})$)
    arc[start angle=\arcStartCD, end angle=\arcEndCD, x radius=\xa, y radius=\ya];
\end{scope}
\tkzDefPointBy[rotation=center m4 angle \theta](preC)
\tkzGetPoint{C}
\tkzDefPointBy[rotation=center m4 angle \theta](preD)
\tkzGetPoint{D}

\filldraw (A) circle (0.5pt);
\filldraw (B) circle (0.5pt);
\filldraw (C) circle (0.5pt);
\filldraw (D) circle (0.5pt);
\filldraw (p3) circle (0.5pt) node[above left] {$p^3$};
\filldraw(p4) circle (0.5pt) node[below] {$p^4$};    
\filldraw(i2) circle (0.5pt) node[above right] {$p^2_*$}; 

\filldraw(i3) circle (0.5pt) node[below] {$p^1_*$}; 
\end{tikzpicture}
\end{minipage}
\begin{minipage}{0.4\linewidth}
        \centering
    \tdplotsetmaincoords{0}{0}
\begin{tikzpicture}[tdplot_main_coords, scale=2.2]

\def\R{1}
\def\D{1.66}

\coordinate (p3) at (-0.926, 0.3778, 0); 
\coordinate (p4) at (0, -1, 0);
\coordinate (p3m)  at (0.926, 0.3778, 0);
\coordinate(z) at (0,0,0);

\shade[ball color=blue!10, opacity=0.2] (0,0,0) circle (\R);

 \tkzDefPointBy[reflection = over p3--z](p4)
 \tkzGetPoint{p4m}

  \tkzDefMidPoint(p3,p3m)
 \tkzGetPoint{m3}

  \tkzDefMidPoint(p4,p4m)
 \tkzGetPoint{m4}
\path[name path=e1](m3) ellipse[x radius=0.926, y radius=0.15];

\path[name path=e2, rotate around={68:(m4)}] (m4) ellipse[x radius=0.926, y radius=0.15];
 
\path[name intersections={of=e1 and e2, by={i1,i2,i3,i4}}];
\tkzLabelSegment[orange,above=1.5ex](p3,p3m){$\Gamma_4$}
\tkzLabelSegment[dgreen, below](p4,p4m){$\Gamma_3$}
 \tkzInterLL(p3,p3m)(p4,p4m)
 \tkzGetPoint{p1s}

\draw[orange, dashed] ([shift={(-0.926,0)}]m3) arc[start angle=180, end angle=0, x radius=0.926, y radius=0.15];
\draw[orange] ([shift={(-0.926,0)}]m3) arc[start angle=180, end angle=360, x radius=0.926, y radius=0.15];
\draw[dgreen, dashed, rotate around={68:(m4)}] ([shift={(-0.926,0)}]m4)arc[start angle=180, end angle=360, x radius=0.926, y radius=0.15];
\draw[dgreen,rotate around={68:(m4)}] ([shift={(0.926,0)}]m4)arc[start angle=0, end angle=180, x radius=0.926, y radius=0.15];

\def\xa{0.926}
\def\ya{0.15}

\def\startAngle{260}
\def\endAngle{150}

\path (m3) ++({\xa*cos(\startAngle)}, {\ya*sin(\startAngle)}) coordinate (A);
\path (m3) ++({\xa*cos(\endAngle)}, {\ya*sin(\endAngle)}) coordinate (B);
\draw[red, thick] (A) arc[start angle=\startAngle, end angle=\endAngle, x radius=\xa, y radius=\ya];

\def\theta{68}

\def\arcStartCD{100}
\def\arcEndCD{180}

\path (m4) ++({\xa*cos(\arcStartCD)}, {\ya*sin(\arcStartCD)}) coordinate (preC);
\path (m4) ++({\xa*cos(\arcEndCD)}, {\ya*sin(\arcEndCD)}) coordinate (preD);

\begin{scope}[rotate around={\theta:(m4)}]
  \draw[red, thick]
    ($(m4) + ({\xa*cos(\arcStartCD)}, {\ya*sin(\arcStartCD)})$)
    arc[start angle=\arcStartCD, end angle=\arcEndCD, x radius=\xa, y radius=\ya];
\end{scope}
\tkzDefPointBy[rotation=center m4 angle \theta](preC)
\tkzGetPoint{C}
\tkzDefPointBy[rotation=center m4 angle \theta](preD)
\tkzGetPoint{D}

\filldraw (A) circle (0.5pt);
\filldraw (B) circle (0.5pt);
\filldraw (C) circle (0.5pt);
\filldraw (D) circle (0.5pt);
\filldraw (p3) circle (0.5pt) node[above left] {$p^3$};
\filldraw(p4) circle (0.5pt) node[below] {$p^4$};    
\filldraw(i2) circle (0.5pt) node[above right] {$p^2_*$}; 

\filldraw(i3) circle (0.5pt) node[below] {$p^1_*$}; 
\end{tikzpicture}
\end{minipage}

    \caption{Case 1: If $p^1_* \notin T_1$ then $T_1\cap T_2 = \emptyset$. The points on $\Gamma_3$ and $\Gamma_4$ that have distance at most $D$ to $p^2$ are marked in red. 
    Case 1.2 (left): If on both small circles, $\Gamma_3$ and $\Gamma_4$, there are no points in $T_1$ with a negative third coordinate, $p^3$ as well as $p^4$ need to be intersection points, implying $p^2=p^2_*$. Case 1.3 (right): If this only is the case for $\Gamma_4$, we still have $p^2\in \Gamma_4$.
    }
    \label{fig:case21}
\end{figure}
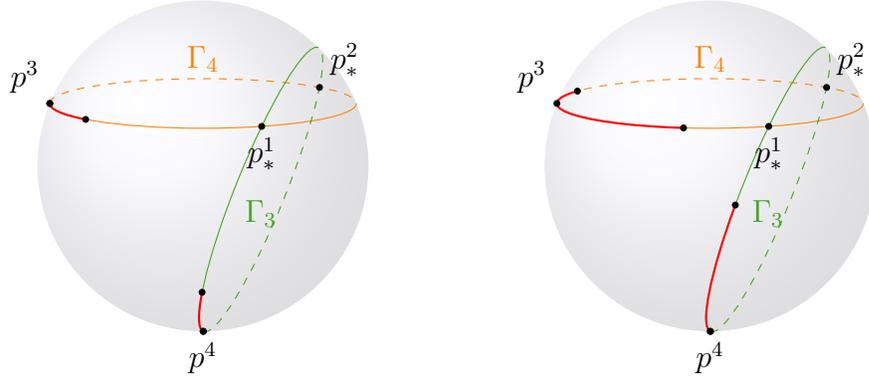

  \textit{Case 2:} Since the assumption $p^1_*\notin T_1$ led to a contradiction in all subcases, we see that $p^1_*\in T_1$ must be true (\cf~\Cref{fig:case22}). 
  As mentioned before, $p^3$ and $p^4$ are also contained in $T_1$. Thus, the two intersection points of $\Gamma_2$ and $\Gamma_4$ may be both in the part of $\Gamma_4$ between $p^1_*$ and $p^3$  with no negative third coordinates ("front") or both in the other part ("back"). The same holds true for the pair of intersection points of $\Gamma_2\cap \Gamma_3$, now with the front / back parts of $\Gamma_3$ between $p^1_*$ and $p^4$. 

  \textit{Case 2.1:} 
 Assume both pairs are in the back. By a similar argument as in Case 1.3, this configuration is not possible if $p^2=p^2_*$. 
Thus, let $p^2\neq p^2_*$. Then, all points in $T_1$ are connected within $T_1$ to a point with negative third coordinate on $\Gamma_3$ or $\Gamma_4$: since every point in $T_1$ needs to be connected to at least one of the circles $\Gamma_3$ or $\Gamma_4$, and on these circles, close to $p^3$ or $p^4$, there exist points with negative third coordinate. 
Thus, in this case (P1) would imply $T_1\cap T_2=\emptyset$,  contradicting $p^1 \in T_1 \cap T_2$.
 
 \textit{Case 2.2: }Now, assume the pairs are in different parts, w.l.o.g. in the back for $\Gamma_3$ and in the front for $\Gamma_4$. The front pair implies $p^2_*\in T_1$. Furthermore, $p^4\in \Gamma_2\cap \Gamma_3$, since otherwise all points in $T_1$ are connected to one with negative third coordinate, which would imply $T_1\cap T_2=\emptyset$ by (P1). 
 Thus, the second intersection point on $\Gamma_3$  needs to have a negative third coordinate. Now, $p^4\in \Gamma_2$ implies $p^2\in \Gamma_4$ and therefore, $p^2=(\alpha, \frac{D^2}{2}-1, \gamma)^{\top}$ for some $\alpha\geq p^3_1=-\sqrt{D^2-\frac{D^4}{4}}$ and $\gamma<0$. Using \eqref{eq:distancehalf-space} we obtain
\begin{equation*}
\begin{split}
    \Gamma_2\cap \Gamma_3=\left\{x\in\S: \alpha x_1 +\left(\frac{D^2}{2}-1\right)x_2+\gamma x_3\right.&=- \left(\frac{D^2}{2}-1\right),\\
   -\sqrt{D^2-\frac{D^4}{4}} x_1 +\left(\frac{D^2}{2}-1\right)x_2&= \left.- \left(\frac{D^2}{2}-1\right) \right\}.
\end{split}
\end{equation*}
Subtracting the two equations yields
\begin{equation*}
     \left(\alpha+\sqrt{D^2-\frac{D^4}{4}}\right) x_1+ \gamma x_3=0 \quad 
     \Longleftrightarrow \quad 
     x_3=-\frac{1}{\gamma} \left(\alpha+\sqrt{D^2-\frac{D^4}{4}}\right) x_1.
\end{equation*}
Since $x_1\geq 0$ for all $x\in \Gamma_3$, we have $x_3\geq 0$, too, which contradicts our previous conclusion that the intersection points of $\Gamma_2$ and  $\Gamma_3$ are $p^4$ and a point with negative third coordinate. 

 \textit{Case 2.3: }Finally, if both pairs are in the front, $T_1$ has two components. Since we are intersecting $\Gamma_3$ and $\Gamma_4$ also with the small circle $\Gamma_2$, we obtain a part containing $p^1_*$ and a part containing $p^3$ and $p^4$. The second one contains points with negative third coordinates and does therefore not intersect $T_2$ by (P1). The remaining first component cannot be empty as it must contain $p^1$.  
 
 Since Case 2.3 is the only case that does not lead to a contradiction, $T=T_1\cap T_2$ is always exactly this one component, a simply connected set bounded by $\Gamma_2$, $\Gamma_3$, and $\Gamma_4$ and containing $p^1_*$. 
 

  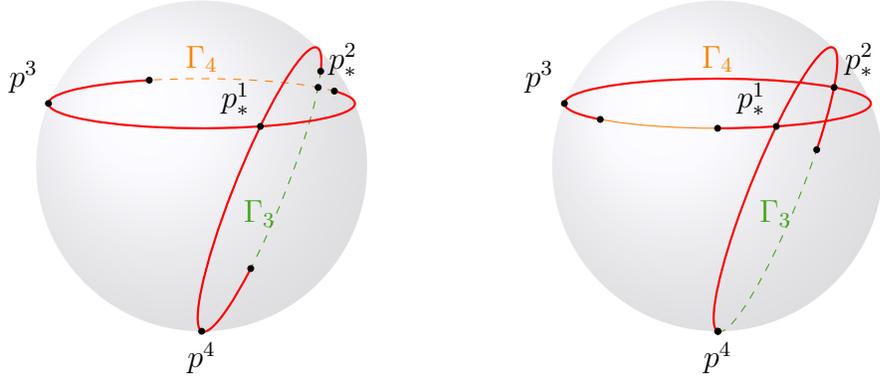
\begin{figure}
\begin{minipage}{0.4\linewidth}
        \centering
    \tdplotsetmaincoords{0}{0}
\begin{tikzpicture}[tdplot_main_coords, scale=2.2]

\def\R{1}
\def\D{1.66}

\coordinate (p3) at (-0.926, 0.3778, 0); 
\coordinate (p4) at (0, -1, 0);
\coordinate (p3m)  at (0.926, 0.3778, 0);
\coordinate(z) at (0,0,0);

\shade[ball color=blue!10, opacity=0.2] (0,0,0) circle (\R);

 \tkzDefPointBy[reflection = over p3--z](p4)
 \tkzGetPoint{p4m}

  \tkzDefMidPoint(p3,p3m)
 \tkzGetPoint{m3}

  \tkzDefMidPoint(p4,p4m)
 \tkzGetPoint{m4}
\path[name path=e1](m3) ellipse[x radius=0.926, y radius=0.15];

\path[name path=e2, rotate around={68:(m4)}] (m4) ellipse[x radius=0.926, y radius=0.15];
 
\path[name intersections={of=e1 and e2, by={i1,i2,i3,i4}}];
\tkzLabelSegment[orange,above=1.5ex](p3,p3m){$\Gamma_4$}
\tkzLabelSegment[dgreen, below](p4,p4m){$\Gamma_3$}
 \tkzInterLL(p3,p3m)(p4,p4m)
 \tkzGetPoint{p1s}

\draw[orange, dashed] ([shift={(-0.926,0)}]m3) arc[start angle=180, end angle=0, x radius=0.926, y radius=0.15];
\draw[orange] ([shift={(-0.926,0)}]m3) arc[start angle=180, end angle=360, x radius=0.926, y radius=0.15];
\draw[dgreen, dashed, rotate around={68:(m4)}] ([shift={(-0.926,0)}]m4)arc[start angle=180, end angle=360, x radius=0.926, y radius=0.15];
\draw[dgreen,rotate around={68:(m4)}] ([shift={(0.926,0)}]m4)arc[start angle=0, end angle=180, x radius=0.926, y radius=0.15];

\def\xa{0.926}
\def\ya{0.15}

\def\startAngle{390}
\def\endAngle{110}

\path (m3) ++({\xa*cos(\startAngle)}, {\ya*sin(\startAngle)}) coordinate (A);
\path (m3) ++({\xa*cos(\endAngle)}, {\ya*sin(\endAngle)}) coordinate (B);
\draw[red, thick] (A) arc[start angle=\startAngle, end angle=\endAngle, x radius=\xa, y radius=\ya];

\def\theta{68}

\def\arcStartCD{330}
\def\arcEndCD{600}

\path (m4) ++({\xa*cos(\arcStartCD)}, {\ya*sin(\arcStartCD)}) coordinate (preC);
\path (m4) ++({\xa*cos(\arcEndCD)}, {\ya*sin(\arcEndCD)}) coordinate (preD);

\begin{scope}[rotate around={\theta:(m4)}]
  \draw[red, thick]
    ($(m4) + ({\xa*cos(\arcStartCD)}, {\ya*sin(\arcStartCD)})$)
    arc[start angle=\arcStartCD, end angle=\arcEndCD, x radius=\xa, y radius=\ya];
\end{scope}
\tkzDefPointBy[rotation=center m4 angle \theta](preC)
\tkzGetPoint{C}
\tkzDefPointBy[rotation=center m4 angle \theta](preD)
\tkzGetPoint{D}

\filldraw (A) circle (0.5pt);
\filldraw (B) circle (0.5pt);
\filldraw (C) circle (0.5pt);
\filldraw (D) circle (0.5pt);
\filldraw (p3) circle (0.5pt) node[above left] {$p^3$};
\filldraw(p4) circle (0.5pt) node[below] {$p^4$};    
\filldraw(i2) circle (0.5pt) node[above right] {$p^2_*$}; 

\filldraw(i3) circle (0.5pt) node[above left] {$p^1_*$}; 
\end{tikzpicture}
\end{minipage}
\begin{minipage}{0.4\linewidth}
        \centering
    \tdplotsetmaincoords{0}{0}
\begin{tikzpicture}[tdplot_main_coords, scale=2.2]

\def\R{1}
\def\D{1.66}

\coordinate (p3) at (-0.926, 0.3778, 0); 
\coordinate (p4) at (0, -1, 0);
\coordinate (p3m)  at (0.926, 0.3778, 0);
\coordinate(z) at (0,0,0);

\shade[ball color=blue!10, opacity=0.2] (0,0,0) circle (\R);

 \tkzDefPointBy[reflection = over p3--z](p4)
 \tkzGetPoint{p4m}

  \tkzDefMidPoint(p3,p3m)
 \tkzGetPoint{m3}

  \tkzDefMidPoint(p4,p4m)
 \tkzGetPoint{m4}
\path[name path=e1](m3) ellipse[x radius=0.926, y radius=0.15];

\path[name path=e2, rotate around={68:(m4)}] (m4) ellipse[x radius=0.926, y radius=0.15];
 
\path[name intersections={of=e1 and e2, by={i1,i2,i3,i4}}];
\tkzLabelSegment[orange,above=1.5ex](p3,p3m){$\Gamma_4$}
\tkzLabelSegment[dgreen, below](p4,p4m){$\Gamma_3$}
 \tkzInterLL(p3,p3m)(p4,p4m)
 \tkzGetPoint{p1s}

\draw[orange, dashed] ([shift={(-0.926,0)}]m3) arc[start angle=180, end angle=0, x radius=0.926, y radius=0.15];
\draw[orange] ([shift={(-0.926,0)}]m3) arc[start angle=180, end angle=360, x radius=0.926, y radius=0.15];
\draw[dgreen, dashed, rotate around={68:(m4)}] ([shift={(-0.926,0)}]m4)arc[start angle=180, end angle=360, x radius=0.926, y radius=0.15];
\draw[dgreen,rotate around={68:(m4)}] ([shift={(0.926,0)}]m4)arc[start angle=0, end angle=180, x radius=0.926, y radius=0.15];

\def\xa{0.926}
\def\ya{0.15}

\def\startAngle{220}
\def\endAngle{-90}

\path (m3) ++({\xa*cos(\startAngle)}, {\ya*sin(\startAngle)}) coordinate (A);
\path (m3) ++({\xa*cos(\endAngle)}, {\ya*sin(\endAngle)}) coordinate (B);
\draw[red, thick] (A) arc[start angle=\startAngle, end angle=\endAngle, x radius=\xa, y radius=\ya];

\def\theta{68}

\def\arcStartCD{-70}
\def\arcEndCD{180}

\path (m4) ++({\xa*cos(\arcStartCD)}, {\ya*sin(\arcStartCD)}) coordinate (preC);
\path (m4) ++({\xa*cos(\arcEndCD)}, {\ya*sin(\arcEndCD)}) coordinate (preD);

\begin{scope}[rotate around={\theta:(m4)}]
  \draw[red, thick]
    ($(m4) + ({\xa*cos(\arcStartCD)}, {\ya*sin(\arcStartCD)})$)
    arc[start angle=\arcStartCD, end angle=\arcEndCD, x radius=\xa, y radius=\ya];
\end{scope}
\tkzDefPointBy[rotation=center m4 angle \theta](preC)
\tkzGetPoint{C}
\tkzDefPointBy[rotation=center m4 angle \theta](preD)
\tkzGetPoint{D}

\filldraw (A) circle (0.5pt);
\filldraw (B) circle (0.5pt);
\filldraw (C) circle (0.5pt);
\filldraw (D) circle (0.5pt);
\filldraw (p3) circle (0.5pt) node[above left] {$p^3$};
\filldraw(p4) circle (0.5pt) node[below] {$p^4$};    
\filldraw(i2) circle (0.5pt) node[above right] {$p^2_*$}; 

\filldraw(i3) circle (0.5pt) node[above left] {$p^1_*$}; 
\end{tikzpicture}
\end{minipage}

    \caption{Case 2: If $p^1_* \in T_1$ and not both intersection pairs are in front, $T_1\cap T_2$ is empty. 
    The points on $\Gamma_3$ and $\Gamma_4$ that have distance at most $D$ to $p^2$ are marked in red.
    Case 2.1 (left): If both pairs of intersection pairs are in the back and $p^2\neq p^2_*$, all points in $T_1$ are connected to a point with
negative third coordinate. Case 2.2  (right): If one is in the back and one in front, we can assume that $p^4 \in \Gamma_2\cap \Gamma_3$ and obtain a contradiction. }
    \label{fig:case22}
\end{figure}

\end{proof}

If 
$T \neq \set{p^1_*}$, then it is a triangle-like shape defined by three small circles with three vertices (\cf~\Cref{fig:areaT}). In the following, we denote 
the two vertices besides $p^1_*$ by $t^3$, $t^4$, where
$t^3\in T_2 \cap \Gamma_2 \cap \Gamma_3$ and $t^4 \in T_2 \cap \Gamma_2 \cap \Gamma_4$. 
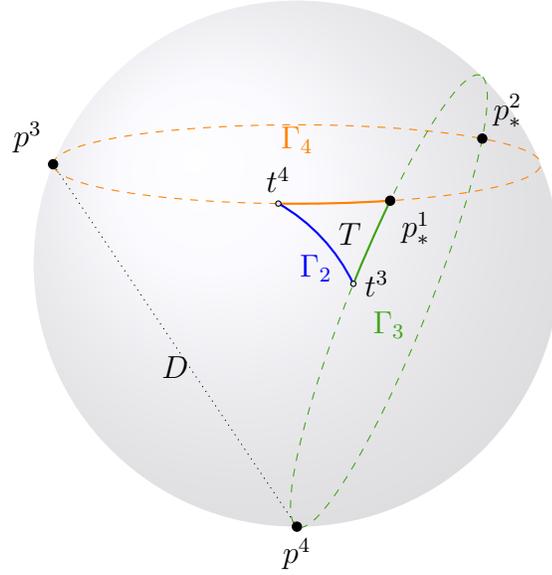
\begin{figure}[ht]
    \centering
    \tdplotsetmaincoords{0}{0}
\begin{tikzpicture}[tdplot_main_coords, scale=3.5]

\def\R{1}
\def\D{1.66}

\coordinate (p3) at (-0.926, 0.3778, 0); 
\coordinate (p4) at (0, -1, 0);
\coordinate (p3m)  at (0.926, 0.3778, 0);
\coordinate(z) at (0,0,0);

\shade[ball color=blue!10, opacity=0.2] (0,0,0) circle (\R);

 \tkzDefPointBy[reflection = over p3--z](p4)
 \tkzGetPoint{p4m}

  \tkzDefMidPoint(p3,p3m)
 \tkzGetPoint{m3}

  \tkzDefMidPoint(p4,p4m)
 \tkzGetPoint{m4}
\path[name path=e1](m3) ellipse[x radius=0.926, y radius=0.15];

\path[name path=e2, rotate around={68:(m4)}] (m4) ellipse[x radius=0.926, y radius=0.15];
 
\path[name intersections={of=e1 and e2, by={i1,i2,i3,i4}}];
\tkzDrawSegment[dotted](p4,p3)
\tkzLabelSegment[orange, above](p3,p3m){$\Gamma_4$}
\tkzLabelSegment[dgreen, below](p4,p4m){$\Gamma_3$}
\tkzLabelSegment(p4,p3){$D$}
 \tkzInterLL(p3,p3m)(p4,p4m)
 \tkzGetPoint{p1s}

\draw[orange, dashed] ([shift={(-0.926,0)}]m3) arc[start angle=180, end angle=-180, x radius=0.926, y radius=0.15];


\draw[dgreen,dashed, rotate around={68:(m4)}] ([shift={(0.926,0)}]m4)arc[start angle=0, end angle=360, x radius=0.926, y radius=0.15];

\def\xa{0.926}
\def\ya{0.15}

\def\startAngle{265}
\def\endAngle{292}

\path (m3) ++({\xa*cos(\startAngle)}, {\ya*sin(\startAngle)}) coordinate (A);
\path (m3) ++({\xa*cos(\endAngle)}, {\ya*sin(\endAngle)}) coordinate (B);
\draw[orange, thick] (A) arc[start angle=\startAngle, end angle=\endAngle, x radius=\xa, y radius=\ya];

\def\theta{68}

\def\arcStartCD{68}
\def\arcEndCD{90}

\path (m4) ++({\xa*cos(\arcStartCD)}, {\ya*sin(\arcStartCD)}) coordinate (preC);
\path (m4) ++({\xa*cos(\arcEndCD)}, {\ya*sin(\arcEndCD)}) coordinate (preD);

\begin{scope}[rotate around={\theta:(m4)}]
  \draw[dgreen, thick]
    ($(m4) + ({\xa*cos(\arcStartCD)}, {\ya*sin(\arcStartCD)})$)
    arc[start angle=\arcStartCD, end angle=\arcEndCD, x radius=\xa, y radius=\ya];
\end{scope}
\tkzDefPointBy[rotation=center m4 angle \theta](preC)
\tkzGetPoint{C}
\tkzDefPointBy[rotation=center m4 angle \theta](preD)
\tkzGetPoint{D}

\tkzDefPointOnLine[pos=0.45](p4,p3)
 \tkzGetPoint{m5}
\path[name path=e3, rotate around={292:(m5)}] (m5) ellipse[x radius=0.7, y radius=0.7];

\path[name intersections={of=e3 and e2, by={k1,k2,k3,k4}}];
\path[name intersections={of=e3 and e1, by={h1,h2,h3,h4}}];

\tkzDrawArc[ thick, dblue](m5,k2)(h2)


\filldraw (p3) circle (0.5pt) node[above left] {$p^3$};
\filldraw(p4) circle (0.5pt) node[below] {$p^4$};    
\filldraw(i2) circle (0.5pt) node[above right] {$p^2_*$}; 

\filldraw(i3) circle (0.5pt) node[below right] {$p^1_*$}; 

\tkzLabelSegment[pos=0.65, above right](h2,k2){$T$}
\tkzLabelSegment[pos=0.5, blue](k2,h2){$\Gamma_2$}
\tkzLabelPoint[above](h2){$t^4$}
\tkzLabelPoint[right](k2){$t^3$}
\tkzDrawPoints(k2,h2)
\end{tikzpicture}

    \caption{If $p^2$ is chosen such that $T$ is not empty, it is a triangular region bounded by $\Gamma_2$, $\Gamma_3$, and $\Gamma_4$. }
    \label{fig:areaT}
\end{figure}

\begin{lemma}
\label{lem:choicesp1}
Let $p^2,p^3,p^4,p^1_{*}$ and $T$ be defined as in \Cref{lem:defT} and $t^3,t^4$ as above. Then, 
\begin{equation*}
    T \subset \conv_{C,p^2}\left(\set{t^3,t^4,p^1_*}\right).
\end{equation*}


\end{lemma}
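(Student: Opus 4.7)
My plan is to show $T \subset \conv_{\B, p^2}(\set{t^3, t^4, p^1_*}) = X \cap \S$, where, setting $(q^1, q^2, q^3) := (t^3, t^4, p^1_*)$,
\[
X := \set{p^2 + \sum_{i=1}^3 \alpha_i(q^i - p^2) : \alpha_i \geq 0, \sum_i \alpha_i \geq 1}.
\]
Since $T \subset \S$, this reduces to $T \subset X$. The set $X$ is convex, being the intersection of four closed half-spaces bounded by $\Pi := \aff(\set{q^1, q^2, q^3})$ (on the side opposite $p^2$) and by $\Pi_{ij} := \aff(\set{p^2, q^i, q^j})$ (on the side of the remaining vertex $q^k$), provided $\set{p^2, q^1, q^2, q^3}$ is affinely independent.

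I would then verify that each of the three bounding arcs of $T$ lies in $X$ and infer $T \subset X$ by a connectivity argument. The critical structural observation is $\Pi \cap H_2 = \Pi_{12} \cap H_2 = \aff(\set{t^3, t^4})$, since both planes contain $t^3$ and $t^4$; analogous identifications hold in $H_3$ and $H_4$. As a consequence, $X \cap H_2$ is a planar triangle in $H_2$ with vertices $t^3$, $t^4$, and $z := H_2 \cap \aff(\set{p^2, p^1_*})$ (note $\Pi_{13} \cap \Pi_{23} = \aff(\set{p^2, p^1_*})$). Showing that the arc $A_2$ of $\Gamma_2$ bounding $T$ lies in $X$ then reduces to the two-dimensional claim $A_2 \subset \conv(\set{t^3, t^4, z})$ inside $H_2$.

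I would split this two-dimensional claim into: (i) $A_2$ lies on the same side of $\ell_1 := \aff(\set{t^3, t^4})$ in $H_2$ as $z$, equivalently on the far side of $\Pi$ from $p^2$; and (ii) $A_2$ stays within the wedge formed by $H_2 \cap \Pi_{13}$ (through $t^3$) and $H_2 \cap \Pi_{23}$ (through $t^4$). For (i), the chord $[p^2, p^1_*]$ lies in the half-space $\set{-p^2 \cdot x \le D^2/2 - 1}$ because $\|p^1_* - p^2\| \le D$, so its extension past $p^1_*$ meets $H_2$ in a point $z$ on the far side of $\Pi$ from $p^2$; this coincides with the side of $\ell_1$ toward which $A_2$ bulges, since $A_2$ borders the part of $T$ containing $p^1_*$. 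Part (ii) is the technical crux, and I expect to resolve it either by comparing tangent directions of $\Gamma_2$ at $t^3$ and $t^4$ against the wedge lines, or by an explicit coordinate computation using~\eqref{eq:p3p4def}. The arcs $A_3 \subset H_3$ and $A_4 \subset H_4$ are handled by symmetric arguments.

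Having $\partial T \subset X$, the inclusion $T \subset X$ will follow because $T$ is simply connected (Lemma~\ref{lem:defT}), $X$ is convex, and the four bounding planes of $X$ meet $\overline{T}$ only at the three vertices $t^3, t^4, p^1_*$, so no interior path in $T$ can cross out of $X$. The main obstacle I anticipate is the wedge verification in part (ii), where the geometry becomes most delicate.
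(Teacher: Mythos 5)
Your argument has a genuine gap: the step you yourself call the technical crux, part (ii) — that the arc $A_2\subset\Gamma_2$ stays inside the wedge cut out by $\Pi_{13}$ and $\Pi_{23}$ (and its analogues for the arcs on $\Gamma_3,\Gamma_4$) — is never proved; you only announce two possible strategies for it. Since that wedge condition is essentially the entire content of the lemma, the proposal as written does not establish the statement. In addition, your final topological step leans on the unproven assertion that the four bounding planes of $X$ meet $T$ only at the three vertices $t^3,t^4,p^1_*$; this is not a formality, because $\relbd(T)\subset X$ alone does not imply $T\subset X$ (a spherical region can bulge out of a convex body while its boundary curve stays inside), so this assertion carries real weight and would itself need an argument. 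A smaller point: your half-space description of $X$ (hence the reduction of ``$A_2\subset X$'' to the planar triangle $\conv(\set{t^3,t^4,z})$ in $H_2$) requires $\set{p^2,t^3,t^4,p^1_*}$ to be affinely independent, which should be justified.

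The paper avoids exactly the difficulty you ran into by arguing in the opposite direction: instead of showing the boundary arcs of $T$ lie inside the truncated cone $X$, it shows the three arcs bounding $\conv_{\B,p^2}\left(\set{t^3,t^4,p^1_*}\right)$, namely the pairwise hulls $\conv_{\B,p^2}(\set{t^3,t^4})$, $\conv_{\B,p^2}(\set{p^1_*,t^3})$, $\conv_{\B,p^2}(\set{p^1_*,t^4})$, never meet $\relint(T)$. The key observation making this a two-line computation is that each pair of these vertices lies on a common circle $\Gamma_j$, i.e.\ both points are at distance exactly $D$ from a common $p^j$; writing $p=p^2+\alpha(u-p^2)+\beta(v-p^2)$ with $\alpha,\beta\geq 0$, $\alpha+\beta\geq 1$ and using \eqref{eq:distancehalf-space} together with $\norm{p^2-p^j}\leq D$ gives $-(p^j)^{\top}p\geq \tfrac{D^2}{2}-1$, hence $\norm{p-p^j}\geq D$ for $p\in\S$, whereas every point of $\relint(T)$ has distance strictly less than $D$ to each of $p^2,p^3,p^4$. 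If you want to salvage your route, this support-function estimate is precisely the ingredient that should replace the tangent-direction or coordinate verification in your part (ii); controlling $\Gamma_2$ directly against the planes $\Pi_{13},\Pi_{23}$ is substantially more delicate than the paper's calculation.
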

\begin{proof} 
The proof works as follows: we show that the $C,p^2$-convex hulls of each pair of the vertices of $T$ does not intersect the relative interior of $T$, implying our claim $T\subset \conv_{C,p^2}\left(\set{t^3,t^4,p^1_*}\right)$.

For the pair $t^3,t^4$ we 
show that if 
$p \in \conv_{C,p^2}(\set{t^3,t^4})$, then $\norm{p-p^2}\geq D$, which implies 
$\conv_{C,p^2}(\set{t^3,t^4})$ does not intersect the interior of $T$.

By definition, $p \in \conv_{C,p^2}(\set{t^3,t^4})$ means that there exist $\alpha,\beta\geq 0$ with $\alpha+\beta\geq 1$ such that 
\begin{equation*}
    p=p^2+\alpha (t^4-p^2)+\beta(t^3-p^2). 
\end{equation*}
We know by \eqref{eq:distancehalf-space} that $-(p^2)^{\top}t^4=-(p^2)^{\top}t^3=\frac{D^2}{2}-1$ and that $-(p^2)^{\top}x \geq\frac{D^2}{2}-1$ is equivalent to $\norm{x-p^2}\geq D$. However,
\begin{align*}
    -(p^2)^{\top}p &= -(p^2)^{\top}p^2 - \alpha((p^2)^{\top}t^4-(p^2)^{\top}p^2)-\beta((p^2)^{\top}t^3-(p^2)^{\top}p^2)\\
    &= -1 +\alpha \left(\frac{D^2}{2}-1\right) +\alpha +\beta \left( \frac{D^2}{2}-1\right) +\beta\geq \frac{D^2}{2}-1,
\end{align*} and therefore $\norm{p-p^2}\geq D$. 

For the pairs $p^1_*,t^i$, $i\in\{3,4\}$, $p \in \conv_{C,p^2}(\set{p^1_*,t^i})$ implies there exist $\alpha,\beta\geq 0$ with $\alpha+\beta\geq 1$ with
    \begin{equation*}
        p=p^2+\alpha (t^i-p^2)+\beta(p^1_*-p^2).
    \end{equation*}
Using $\norm{p^i-p^2}\leq D$, we obtain
\begin{align*}
    -(p^i)^{\top}p &= -(p^i)^{\top}p^i - \alpha((p^i)^{\top}t^i-(p^i)^{\top}p^2)-\beta((p^i)^{\top}p^1_*-(p^i)^{\top}p^2)\\
    &=(-1+\alpha+\beta)(p^i)^{\top}p^2 +\alpha \left( \frac{D^2}{2}-1\right) +\beta \left( \frac{D^2}{2}-1\right) \\
   &\geq (-1+\alpha+\beta)\left(- \frac{D^2}{2}+1\right)+\alpha \left( \frac{D^2}{2}-1\right)+\beta\left( \frac{D^2}{2}-1\right) =  \frac{D^2}{2}-1,
\end{align*} 
 and therefore $\norm{p-p^i}\geq D$. 
\end{proof}

Since we have $\set{t^3,t^4,p^1_*}\subset T \subset\conv_{C,p^2}\left(\set{t^3,t^4,p^1_*}\right) $, the above lemma, together with \Cref{cor:bdarea}, proves that the smallest inradius of $S$ is attained 
for $p^1 \in \set{p^1_*,t^3,t^4}$. 

Next, we prove that it suffices to consider simplices $S$ with the property that each vertex of $S$ is adjacent to at least two edges of length $D$ or, in other words, simplices with at least four diametrical edges and at most two opposing edges of shorter length. 

\begin{lemma}
    \label{lem:optinradius1}
    The inradius of every three-dimensional simplex $S$ optimally contained in $\B$ with $D(S)\in\left[\sqrt{\frac{8}{3}}, \sqrt{3}\right)$ is at least the inradius of a simplex optimally contained in $\B$ with the property that each vertex is adjacent to at least two edges of length $D(S)$.
    
\end{lemma}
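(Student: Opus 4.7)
I would attack this iteratively, by repeatedly applying the three-position principle (\Cref{lem:choicesp1} combined with \Cref{cor:bdarea}) to one vertex at a time. For a simplex $S$ with vertices $p^1,\ldots,p^4$, write $d_i$ for the number of edges of length $D:=D(S)$ incident to $p^i$, and put $\alpha(S):=\#\{i:d_i\geq 2\}$. The claim to be proved is: whenever $\alpha(S)<4$, one can find a simplex $S'$ with $D(S')=D$, $S'\optc\B$, $r(S',\B)\leq r(S,\B)$, and $\alpha(S')>\alpha(S)$. Since $\alpha\leq 4$, at most four such steps produce a simplex with $\alpha=4$, which is precisely the claim of the lemma (four diametrical edges and the two shorter edges necessarily a pair of opposite edges).

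Given $\alpha(S)<4$, pick a vertex $v$ with $d_v<2$. If some diametrical edge of $S$ avoids $v$, apply the principle with $v$ in the role of $p^1$ and that diametrical edge in the role of $p^3p^4$: the optimal relocation of $v$ lands at one of the three positions analogous to $p^1_*,t^3,t^4$, each of which makes $v$ diametrical to exactly two of the remaining three vertices. In the opposite case (all diametrical edges of $S$ contain $v$, which forces $d_v=1$ and a unique diametrical edge $vu$), apply the principle instead to some third vertex $w\notin\{v,u\}$ using $vu$ as the fixed diametrical edge; again the moved vertex $w$ ends up with $d_w=2$.

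The crucial verification is that $\alpha$ strictly increases. Denote the moved vertex by $x$; all edges not meeting $x$ are unaffected, so only $x$ and its three neighbours can change their $d$-values. Since $x$ becomes good ($d_x=2$) by the preceding paragraph, the only threat to a strict increase of $\alpha$ is that some previously good vertex $u$ loses its diametrical edge to $x$. Any such $u$ had $d_u\ge 2$ before the move, hence another diametrical edge $uw$ with $w\notin\{x,u\}$; moreover, among the three positions produced by \Cref{lem:choicesp1}, the two new diametrical partners of $x$ are always the two vertices \emph{other than} $u$, so $xw$ becomes diametrical after the move, bumping $w$ from $d_w\ge 1$ to $d_w\ge 2$. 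Hence every ``loss'' at $u$ is compensated by a ``gain'' at $w$, while the promotion of $x$ itself delivers the strict increase $\alpha(S')>\alpha(S)$.

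The main obstacle I expect is precisely this compensation argument: one must check, across all subcases of $d_v\in\{0,1\}$ and all three choices of new position, that the two new diametrical edges at the moved vertex fall on the correct pair. This reduces to the combinatorial observation that $p^1_*,t^3,t^4$ correspond exactly to the three $2$-element subsets of $\{p^2,p^3,p^4\}$, so whenever the diametrical edge $xu$ is lost, the new pair must be the pair of vertices different from $u$, which is exactly the one containing the compensating $w$. Once this is in place, the iteration terminates in at most four steps and delivers the desired simplex $S'$.
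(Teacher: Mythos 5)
Your strategy is viable and genuinely different from the paper's. Both arguments run on the same engine, namely \Cref{lem:choicesp1} combined with \Cref{cor:bdarea}, which forces the relocated vertex into one of the three vertices of $T$, and these are exactly the points diametrical to the three $2$-element subsets of the fixed vertices. The paper, however, applies this engine only twice in a fixed order: first $p^2$ is pushed to a vertex of $\tilde T$ (so that after relabelling $p^2$ is diametrical to $p^3$), then $p^1$ is pushed to one of $p^1_*,t^3,t^4$ and the three resulting configurations are inspected explicitly, with an additional rotational-symmetry argument in the case $p^1=p^1_*$ that allows moving $p^2$ once more to $p^2_*$. Your monovariant $\alpha$ replaces this explicit inspection and the symmetry trick by a termination argument, at the price of combinatorial bookkeeping; you also correctly note that the candidate positions preserve optimal containment and the diameter (they are vertices of $T=T_1\cap T_2$, and $T_2$ encodes the containment condition), so the hypotheses of \Cref{lem:defT} persist through the iteration.

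The bookkeeping, which you yourself flag as the main obstacle, is where your argument has a concrete (but repairable) hole: the claim that every loss at $u$ is compensated by a gain at $w$ is false as stated. Take the threat case: the moved vertex $x$ had its unique diametrical edge $xu$, $u$ had exactly $d_u=2$ with second diametrical edge $uw$, and the minimizing position of $x$ is the one diametrical to the pair $\{w,z\}$, so $u$ drops to $d_u=1$. If $w$ was already good, the new edge from $x$ to $w$ raises nothing in $\alpha$, and your compensation is vacuous; the net change you have justified is then only $+1-1=0$. The repair is a further short count: $d_w\ge 2$ forces the edge $wz$ to be diametrical (the edge $wx$ cannot be, since $d_x=1$ was used up by $xu$), hence $d_z=1$ exactly (its edges to $u$ and to the old $x$ are not diametrical because $d_u=2$ and $d_x=1$ are already accounted for), so the new diametrical edge from $x$ to $z$ promotes $z$ and restores $\alpha(S')\ge\alpha(S)+1$; if instead $w$ was bad, your compensation at $w$ works as written. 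With this case added (and the observation that in your second configuration, where all diametrical edges meet $v$, the moved third vertex has no diametrical edges, so nothing can be lost), the strict increase of $\alpha$ holds for each of the three candidate positions, the iteration terminates in at most four steps, and the lemma follows.
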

\begin{proof}
Let $D:=D(S)\in \left[\sqrt{\tfrac{8}{3}}, \sqrt{3}\right)$ and $\set{p^1,p^2,p^3,p^4}$ be the vertices of the simplex. Since  $D\in\left[\sqrt{\tfrac{8}{3}}, \sqrt{3}\right)$, we know that all four vertices belong to $\S$. By rotational symmetry, we may assume that 
the diameter of $S$ is attained between $p^3$ and $p^4$ as well as  
$p^2_3<0$ and $p^1_3>0$. 
Applying \Cref{lem:choicesp1} (in combination with \Cref{cor:bdarea}) with the roles of $p^1$ and $p^2$ swapped the smallest inradius is attained at one of the vertices of 
\begin{equation*}
\begin{split}
    \Tilde{T}:=&\set{x\in\S: \norm{x-p^1}\leq D, \norm{x-p^3}\leq D, \norm{x-p^4}\leq D }\\&\cap \set{x\in \S:x\in \pos(\set{-p^1,-p^3,-p^4})}.
\end{split}
\end{equation*}
Since at each of the three vertices of $\tilde T$ we have $\norm{p^2-p^3}=D$ or $\norm{p^2-p^4}=D$, we may assume 
w.l.o.g.~that  $\norm{p^2-p^3}=D$, too. 

Now, we apply \Cref{lem:choicesp1} (and \Cref{cor:bdarea}) for $p^1$ itself and compare the inradii for the three cases $p^1\in\set{p^1_*,t^3,t^4}$. 

First, let $p^1=p^1_*$. 
In this case, the triangle with vertices $p^1_*,p^3,p^4$ is regular. Thus, because of rotational symmetry (around the axis orthogonal to $\aff(\set{p^1_*,p^3,p^4})$ through $0$), the choice of $p^2$ out of the three vertices of $\tilde{T}$ does not change the inradius of $\conv\left(\set{p^1_*,p^2,p^3,p^4}\right)$ and since $p^2_*$ is a vertex of $\tilde T$, w.l.o.g., we may choose $p^2_*$. Thus, by \Cref{lem:choicesp1} (in combination with \Cref{cor:bdarea}), 
\begin{equation*}
    r(\conv\left(\set{p^1_*,p^2,p^3,p^4}\right)\geq r(\conv\left(\set{p^1_*,p^2_*,p^3,p^4}\right)).
\end{equation*}
and all but one edge of $\conv\left(\set{p^1_*,p^2_*,p^3,p^4}\right)$ have length $D$. 

Second, if $p^1=t^3$ we obtained the same configuration as with $p^1_*$ mirrored with respect to the hyperplane orthogonal to $[p^2,p^4]$ through $p^3$.

Finally, in case of $p^1=t^4$, four edges have length $D$, and only the non-adjacent edges ($[p^2,p^4]$ and $[p^1,p^3]$) may be shorter.
\end{proof}

In the next lemma, we give a general formula for the inradius for simplices with four edges of diametrical length and two opposing edges that could be shorter. 
\begin{lemma}
\label{lem:specialsimplices}
    Let $S=\conv(\set{x^1,x^2,x^3,x^4})\optc R \B + t$ be a simplex with $\set{x^1,x^2,x^3,x^4}\subset R\S +t$. Furthermore, let four edges of $S$ have length $D:=D(S)$ and two opposing edges of lengths $a$ and $b$possibly shorter than $D$.
    Then, 
    \begin{equation*}
        r(S)=\frac{\left(\sqrt{R^2-\frac{a^2}{4}}+\sqrt{R^2-\frac{b^2}{4}}\right)ab}{2a\sqrt{D^2-\frac{a^2}{4}}+2b\sqrt{D^2-\frac{b^2}{4}}}.
    \end{equation*}
\end{lemma}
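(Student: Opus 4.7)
The plan is to exploit the $D_2$-symmetry of the edge-length pattern to place $S$ in convenient coordinates, compute its volume via the classical formula for a tetrahedron with two opposite edges, compute the total surface area directly, and conclude with the identity $V(S) = \tfrac{1}{3}\,r(S)\,A(S)$. The latter rests on the fact that the inball of a $3$-simplex touches all four facets, which follows from \Cref{prop:opt} applied with $C = S$: since any three outer facet normals of a non-degenerate tetrahedron are linearly independent (the unique linear relation $\sum A_i n^i = 0$ involves all four facets) and no two are antipodal, the only subset of extreme points of $S^\circ$ containing $0$ in its convex hull consists of all four normals.

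Relabel so that $\|x^1 - x^2\| = a$, $\|x^3 - x^4\| = b$, and each of the remaining four edges has length $D$. The two involutions $x^1 \leftrightarrow x^2$ and $x^3 \leftrightarrow x^4$ preserve the edge-length pattern, hence fix $S$ and its circumsphere, and their common fixed locus is the line $\ell$ through the midpoints $m_{12}$ and $m_{34}$ of the two opposite edges, perpendicular to both. Consequently the circumcenter lies on $\ell$. After translating so that the circumcenter is at $0$ and rotating so that $\ell$ is the $z$-axis and $[x^1,x^2]$ is parallel to the $x$-axis, we may write
\[
 x^{1,2} = (\pm a/2,\ 0,\ h_1), \qquad x^{3,4} = (\pm (b/2)\cos\theta,\ \pm (b/2)\sin\theta,\ -h_2)
\]
with $h_1, h_2 > 0$. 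The condition that all four vertices lie on the circumsphere of radius $R$ gives $h_1 = \sqrt{R^2 - a^2/4}$ and $h_2 = \sqrt{R^2 - b^2/4}$, while the symmetric pair of constraints $\|x^1 - x^3\|^2 = \|x^1 - x^4\|^2 = D^2$ forces, by subtraction, $ab\cos\theta = 0$, so $\theta = \pi/2$ and the two opposite edges are perpendicular.

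The classical formula for the volume of a tetrahedron with perpendicular opposite edges of lengths $a, b$ at distance $d = h_1 + h_2$ gives $V(S) = \tfrac{1}{6}\,ab(h_1 + h_2)$. The four faces split into two congruent pairs of isosceles triangles: for $j \in \{3,4\}$, $\conv(\{x^1, x^2, x^j\})$ has sides $D, D, a$ and area $\tfrac{a}{2}\sqrt{D^2 - a^2/4}$; for $i \in \{1,2\}$, $\conv(\{x^i, x^3, x^4\})$ has sides $D, D, b$ and area $\tfrac{b}{2}\sqrt{D^2 - b^2/4}$. Summing gives $A(S) = a\sqrt{D^2 - a^2/4} + b\sqrt{D^2 - b^2/4}$. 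Substituting $V(S)$, $A(S)$, and the expressions for $h_1, h_2$ into $r(S) = 3V(S)/A(S)$ yields exactly the claimed formula. The only non-routine step is the symmetry argument pinning down the circumcenter to lie on the common perpendicular of the two short opposite edges; the remainder is standard Euclidean geometry.
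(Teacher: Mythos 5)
Your argument is correct, but it takes a genuinely different route from the paper's proof of \Cref{lem:specialsimplices}. After essentially the same normalization (circumcenter at the origin, the two non-diametrical opposite edges horizontal and mutually perpendicular), the paper stays with the inball itself: it projects $S$ and its inball onto the $x_1,x_3$- and $x_2,x_3$-planes, overlays the two projected circles, and reads off $r$ from the trigonometric relations \eqref{eq:Dsimplex1}--\eqref{eq:Dsimplex5}, using that the inball touches all four facets and that the incenter lies on the symmetry axis. You instead compute $V(S)=\tfrac16\,ab\,(h_1+h_2)$ and $A(S)=a\sqrt{D^2-a^2/4}+b\sqrt{D^2-b^2/4}$ and finish with $r=3V/A$, which rests on the same touching-all-facets fact (your justification via \Cref{prop:opt} is sound, and it is also how the paper argues in \Cref{lem:inballsimplex}); moreover, your symmetry argument pinning the circumcenter to the common perpendicular of the two short edges and forcing $\theta=\pi/2$ actually proves the coordinate placement that the paper only asserts. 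The one step you state without justification is $h_1,h_2>0$, i.e.\ that the two opposite edges lie on opposite sides of the circumcenter; this is precisely where a hypothesis must enter, but it is a one-line fix: by $S\optc R\B+t$ together with \Cref{prop:opt} and \Cref{rem:opteuclidean}, the circumcenter lies in the convex hull of the touching vertices and hence in $S$, so its height lies (weakly) between the heights of the two edges; alternatively, if both edges lay on the same side, with heights $z_1\ge z_2\ge 0$ and $a\le b$, then $(z_1-z_2)^2\le z_1^2-z_2^2=(b^2-a^2)/4$ would give $D^2=a^2/4+b^2/4+(z_1-z_2)^2\le b^2/2<b^2$, contradicting $D\ge b$. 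With that sentence added, your volume--surface-area argument is a clean, elementary alternative to the paper's projection computation.
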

\begin{proof} Surely, we may assume w.l.o.g.~that $a \le b$ and by translational invariance that $t=0$.
    Moreover, since $\B$ is invariant under rotation, we may also assume
    \begin{equation*}
        x^1=\begin{pmatrix}
            0 \\ \frac{a}{2} \\ \sqrt{R^2-\frac{a^2}{4}}
        \end{pmatrix},\quad x^2=\begin{pmatrix}
             0 \\ -\frac{a}{2} \\ \sqrt{R^2-\frac{a^2}{4}}
        \end{pmatrix}, \quad x^3=\begin{pmatrix}
            \frac{b}{2} \\ 0 \\ -\sqrt{R^2-\frac{b^2}{4}}
        \end{pmatrix}, \quad x^4=\begin{pmatrix}
            -\frac{b}{2} \\ 0 \\ -\sqrt{R^2-\frac{b^2}{4}}
        \end{pmatrix}.
    \end{equation*}
    Since $\norm{x^1-x^3}=D$, we obtain
    \begin{equation*}
    \begin{split}
         D^2&= \frac{a^2}{4}+\frac{b^2}{4}+R^2-\frac{a^2}{4}+ R^2-\frac{b^2}{4} + 2\sqrt{R^2-\frac{a^2}{4}}\sqrt{R^2-\frac{b^2}{4}}\\
         &=2R^2+ 2\sqrt{R^2-\frac{a^2}{4}}\sqrt{R^2-\frac{b^2}{4}},
    \end{split}
        \end{equation*}
        and therefore
         \begin{equation}
         \label{eq:ab}
          \sqrt{R^2-\frac{a^2}{4}}\sqrt{R^2-\frac{b^2}{4}}=\frac{D^2-2R^2}{2}.
    \end{equation}
    
    We know that the inball touches all facets of a simplex and that in our situation the incenter $c$ is situated on the $x_3$-axis due to symmetry reasons. Now, if we project $S$ onto the $x_1,x_3$--plane and the $x_2,x_3$--plane, the projections of the inball are circles with radius $r:=r(S)$. When these projections are overlaid, the two projected circles coincide  (\cf~\Cref{fig:specialsimplices}). Furthermore, since each projection direction is parallel to one of the two shorter edges and parallel to two different pairs of facets, after overlaying, 
    the circles touch the projections of all four facets.    
    \begin{figure}
    \centering
    \begin{tikzpicture}
        \tkzDefPoint(0,0){z}
          \tkzDefPoint(0,0.7){h1}
            \tkzDefPoint(0,-0.7){h2}
            \tkzDefPoint(0.85,0){h3}
        \tkzDefPoint(1,0){z1}
        \tkzDefPoint(0.8,-0.6){t3}
        \tkzDefPoint(-0.8,-0.6){t4}
        \tkzDefPoint(0.7,0.714){t1}
        \tkzDefPoint(-0.7,0.714){t2}
        \tkzDrawCircle[thick](z,t1)
         \tkzDefLine[orthogonal=through t1](z,t1) \tkzGetPoint{s1}
        \tkzDefLine[orthogonal=through t2](z,t2) \tkzGetPoint{s2}
         \tkzDefLine[orthogonal=through t3](z,t3) \tkzGetPoint{s3}
         \tkzDefLine[orthogonal=through t4](z,t4) \tkzGetPoint{s4}
          \tkzInterLL(t1,s1)(t2,s2)  \tkzGetPoint{m1}
          \tkzInterLL(t3,s3)(t4,s4)  \tkzGetPoint{m2}

        \tkzDefLine[orthogonal=through m1](z,m1) \tkzGetPoint{s5}
          \tkzDefLine[orthogonal=through m2](z,m2) \tkzGetPoint{s6}
        \tkzInterLL(m1,s5)(t3,m2)  \tkzGetPoint{x1}
          \tkzInterLL(m1,s5)(t4,m2)  \tkzGetPoint{x2}
          \tkzInterLL(m2,s6)(t1,m1)  \tkzGetPoint{x3}
          \tkzInterLL(m2,s6)(t2,m1)  \tkzGetPoint{x4}
          \tkzDrawPolygon(x1,x2,m2)
          \tkzDrawPolygon(x3,x4,m1)
          \tkzDrawSegment[thick, dblue](z,t1)
          \tkzLabelSegment[dblue, pos=0.6](z,t1){$r$}
          \tkzDrawSegment[dgreen](z,m1)
          \tkzDrawSegment[orange](z,m2)
          \tkzLabelSegment[dgreen, pos=0.2, left](z,m1){$h_1$}
           \tkzLabelSegment[orange, pos=0.8, left](m2,z){$h_2$}
           \tkzLabelAngle[pos=0.6](m2,x4,m1){$\alpha$}
 \tkzMarkAngle[size=1](m2,x4,m1)
    \tkzLabelAngle[pos=0.7](m1,x1,m2){$\beta$}
 \tkzMarkAngle[size=1](m1,x1,m2)
 \tkzLabelPoint[right](x1){$\bar{x}^1$}
 \tkzLabelPoint[left](x2){$\bar{x}^2$}
 \tkzLabelPoint[right](x3){$\bar{x}^3$}
 \tkzLabelPoint[left](x4){$\bar{x}^4$}
 \tkzLabelPoint[above](m1){$m^1$}
 \tkzLabelPoint[below](m2){$m^2$}
 \tkzLabelPoint[below right](z){$\bar{c}$}
        \tkzDrawPoints(z,t1,t2,t3,t4,m1,m2,x1,x2,x3,x4)
 \tkzDefPointBy[translation=from z to h1](x1) \tkzGetPoint{y1}
  \tkzDefPointBy[translation=from z to h1](x2) \tkzGetPoint{y2}
   \tkzDefPointBy[translation=from z to h2](x3) \tkzGetPoint{y3}
    \tkzDefPointBy[translation=from z to h2](x4) \tkzGetPoint{y4}
        \tkzDrawSegment[dgreen](y2,y1)
          \tkzDrawSegment[orange](y3,y4)
           \tkzLabelSegment[dgreen,above](y2,y1){$a$}
            \tkzLabelSegment[orange](y3,y4){$b$}
        \tkzDrawPoints[dgreen](y1,y2)
          \tkzDrawPoints[orange](y3,y4)
 \tkzDefPointBy[translation=from z to h3](x3) \tkzGetPoint{y5}
 \tkzDefPointBy[translation=from m2 to m1](y5) \tkzGetPoint{y6}
 \tkzDrawSegment[dblue](y5,y6)
           \tkzLabelSegment[dblue,right](y6,y5){$h$}
            \tkzDrawPoints[dblue](y5,y6)
          
    \end{tikzpicture}
    \caption{Proof of \Cref{lem:specialsimplices}: overlay of the projections of $c+r \B \optc S$ onto the $x_1,x_3$-plane and the $x_2,x_3$-plane.}
    \label{fig:specialsimplices}
\end{figure}
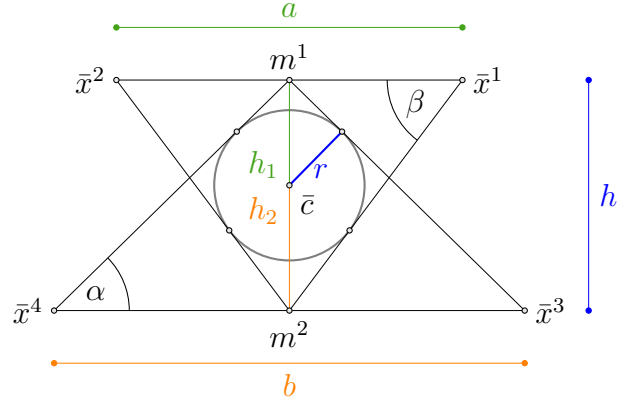

We define 
  \begin{equation*}
        \bar{x}^1:=\begin{pmatrix}
          \frac{a}{2} \\ \sqrt{R^2-\frac{a^2}{4}}
        \end{pmatrix},\quad  \bar{x}^2:=\begin{pmatrix}
              -\frac{a}{2} \\ \sqrt{R^2-\frac{a^2}{4}}
        \end{pmatrix}, \quad  \bar{x}^3:=\begin{pmatrix}
            \frac{b}{2} \\ -\sqrt{R^2-\frac{b^2}{4}}
        \end{pmatrix}, \quad  \bar{x}^4:=\begin{pmatrix}
            -\frac{b}{2} \\  -\sqrt{R^2-\frac{b^2}{4}}
        \end{pmatrix}, 
    \end{equation*}
   and 
    \begin{equation*}
       m^1:=\begin{pmatrix}
             0 \\ \sqrt{R^2-\frac{a^2}{4}}
        \end{pmatrix},\quad  m^2:=\begin{pmatrix}
          0 \\ -\sqrt{R^2-\frac{b^2}{4}}
        \end{pmatrix},
        \quad  \bar{c}:=\begin{pmatrix}
             0 \\  \bar{c}_3
        \end{pmatrix}. 
    \end{equation*}
   Then, $h:= \sqrt{R^2-\frac{a^2}{4}}+\sqrt{R^2-\frac{b^2}{4}}$ denotes the distance between 
   $m^1$ and $m^2$ and $h=h_1+h_2$, where $h_i$ denotes the distance from $m_i$ to $\bar{c}$, $i=1,2$. 
  Furthermore, we define $\alpha:=\angle m^2\bar{x}^4m^1$ and $\beta:=\angle m^1 \bar{x}^1 m^2$. 
    Then,  
    \begin{align}
       \label{eq:Dsimplex1} \frac{r}{h_1}&=\sin(\frac{\pi}{2}-\alpha)=\cos(\alpha),\\
        \label{eq:Dsimplex2}\frac{r}{h_2}&=\sin(\frac{\pi}{2}-\beta)=\cos(\beta),\\
        \label{eq:Dsimplex3} \cos(\beta)&=\frac{\frac{a}{2}}{\sqrt{h^2+\frac{a^2}{4}}},\\
         \label{eq:Dsimplex4}\cos(\alpha)&=\frac{\frac{b}{2}}{\sqrt{h^2+\frac{b^2}{4}}}, \text{ and }\\
         \label{eq:Dsimplex5}h^2&=D^2-\frac{a^2}{4}-\frac{b^2}{4}. 
    \end{align}
    From \eqref{eq:Dsimplex1} and \eqref{eq:Dsimplex2}, it follows $h=h_1+h_2=r\left(\frac{1}{\cos(\alpha)}+\frac{1}{\cos(\beta)}\right)$ and therefore using \eqref{eq:Dsimplex3}, \eqref{eq:Dsimplex4}, and \eqref{eq:Dsimplex5}
    \begin{align*}
        r&=\frac{h}{\frac{\sqrt{h^2+\frac{a^2}{4}}}{\frac{a}{2}}+\frac{\sqrt{h^2+\frac{b^2}{4}}}{\frac{b}{2}}}\\
        &=\frac{hab}{2a\sqrt{h^2+\frac{b^2}{4}}+2b\sqrt{h^2+\frac{a^2}{4}}}\\
        &=\frac{\left(\sqrt{R^2-\frac{a^2}{4}}+\sqrt{R^2-\frac{b^2}{4}}\right)ab}{2a\sqrt{D^2-\frac{a^2}{4}}+2b\sqrt{D^2-\frac{b^2}{4}}}.
    \end{align*}
\end{proof}

The following lemma further specifies the simplices that come into question for the minimal inradius.
Its rather technical proof can be found in the Appendix.
\begin{lemma}
\label{lem:decreasingfct}
Out of the simplices optimally contained in $\B$ with four edges of length $D\in\left[\sqrt{\frac{8}{3}},\sqrt{3}\right)$ and two opposing edges of lengths $a\leq b\leq D$ those with five edges of length $D$ (\ie~$b=D$) uniquely minimize the inradius. For such a simplex $S$, the inradius is given as
\begin{equation*}
    r(S)=\frac{D^2\sqrt{3-D^2}}{4\sqrt{3-D^2}-\sqrt{3}(D^2-4)}.
\end{equation*}
\end{lemma}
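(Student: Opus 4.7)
The plan is to use \Cref{lem:specialsimplices} (with $R=1$) together with the identity \eqref{eq:ab} to reduce $r(S)$ to a function of a single variable, and then show that this function is strictly decreasing over the relevant interval, forcing its unique minimum at $b=D$.

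Concretely, solving \eqref{eq:ab} for $a$ yields
$$a^2 = \frac{4\bigl(D^2(4-D^2)-b^2\bigr)}{4-b^2},$$
which is valid for $b\in[b_0,D]$, where $b_0:=\sqrt{8-2D^2}$ corresponds to the regular tetrahedron limit $a=b$. Substituting this into the formula of \Cref{lem:specialsimplices} gives a one-variable function $\rho(b):=r(S)$. I would then prove $\rho'(b)<0$ on $(b_0,D)$, which forces the unique minimum at $b=D$. Finally, evaluating $\rho(D)$ (using $\sqrt{1-a^2/4}=(D^2-2)/\sqrt{4-D^2}$ at that endpoint) and simplifying produces the claimed value $D^2\sqrt{3-D^2}/(4\sqrt{3-D^2}-\sqrt{3}(D^2-4))$.

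Before differentiating it is cleaner to pass to the variables $u:=\sqrt{1-a^2/4}$ and $v:=\sqrt{1-b^2/4}$, for which the constraint \eqref{eq:ab} becomes simply $uv=(D^2-2)/2$ and
$$r(S) = \frac{(u+v)\sqrt{(1-u^2)(1-v^2)}}{\sqrt{(1-u^2)(D^2-1+u^2)}+\sqrt{(1-v^2)(D^2-1+v^2)}}.$$
This form makes the $u\leftrightarrow v$ symmetry manifest, so by the Lagrange condition $a=b$ (equivalently $u=v$) is automatically a critical point of $\rho$; the claim reduces to showing this critical point is a strict local maximum and the unique interior critical point of $\rho$ on $[b_0,D]$.

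The main obstacle is the sign of $\rho'(b)$. The multiple nested square roots in $\rho$ mean that, after rationalising, the inequality $\rho'(b)<0$ reduces to a polynomial inequality in $b$ and $D$ whose sign is not obvious term by term. I expect that one must factor carefully and invoke both $D\in[\sqrt{8/3},\sqrt{3})$ and $b\in[b_0,D]$ to extract the correct sign. The fact that the excerpt defers this proof to an appendix strongly suggests that this direct calculus argument is indeed the route taken, with most of the work consisting in arranging the derivative expression into a provably negative form.
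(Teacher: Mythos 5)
Your setup coincides with the paper's: with $R=1$, \Cref{lem:specialsimplices} plus the constraint \eqref{eq:ab} is used to eliminate $a$ (your formula $a^2=4\bigl(D^2(4-D^2)-b^2\bigr)/(4-b^2)$ is correct and is exactly the paper's substitution, written there in the variable $x=b^2/4$ over $\bigl[\tfrac{4-D^2}{2},\tfrac{D^2}{4}\bigr]$), after which one proves the resulting one-variable function is strictly decreasing and evaluates at $b=D$. The endpoint evaluation and the symmetric reformulation in $u=\sqrt{1-a^2/4}$, $v=\sqrt{1-b^2/4}$ with $uv=(D^2-2)/2$ are also fine.

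However, there is a genuine gap: the monotonicity claim $\rho'(b)<0$, which is the entire substance of the lemma, is only announced ("I would then prove", "I expect that one must factor carefully"), not established. This is not a routine verification — it is precisely what the paper's appendix is devoted to. There, the numerator of $f'(x)$ is split into two functions $g$ and $h$; $g'$ factors so that its sign is visible on the interval, the sign of $h'$ is reduced to a quadratic $q(x)$ in $x$ whose negativity is checked via its leading coefficient together with $q(\tfrac12)<0$ and $q'(\tfrac12)<0$ (using $x\geq\tfrac{4-D^2}{2}\geq\tfrac12$ and $D^2\in[\tfrac83,3)$), and finally one verifies that $g+h$ vanishes exactly at the left endpoint $a=b$, which yields strict negativity of $f'$ on the half-open interval and hence the \emph{unique} minimizer at $b=D$. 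Without some such decomposition your argument is incomplete. Also note a small inaccuracy in your symmetry remark: in the parametrization by $b\in[b_0,D]$ the symmetric configuration $a=b$ is the left \emph{endpoint} $b_0=\sqrt{8-2D^2}$, not an interior point; the $u\leftrightarrow v$ symmetry does give $\rho'(b_0)=0$ (consistent with the paper's identity $g+h=0$ there), but your proposed reduction to "unique interior critical point plus local maximum" is just another unproven reformulation of the same derivative-sign problem.
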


In the next lemma, we prove inequality \eqref{eq:NewInequality} for simplices. 
\begin{lemma}
    \label{lem:optinradius}
    For every three-dimensional simplex $S$ optimally contained in $R(S)\B$ with $D(S)\in\left[\sqrt{\frac{8}{3}}R(S), \sqrt{3}R(S)\right)$ we have
    \begin{equation*}
        r(S)\geq \frac{D(S)^2\sqrt{3R(S)^2-D(S)^2}}{4R(S)\sqrt{3R(S)^2-D(S)^2}+\sqrt{3}(4R(S)^2-D(S)^2)}.
    \end{equation*}
    Equality is attained if and only if $S$ has five edges of length $D(S)$.
\end{lemma}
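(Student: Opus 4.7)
The plan is to assemble the three preceding lemmas and reduce the bound to a single substitution. By homogeneity of $r,D,R$ under dilation, I would first normalize to $R(S)=1$, so that $D:=D(S)\in[\sqrt{8/3},\sqrt{3})$ and the target inequality becomes
\[
r(S)\ \ge\ \frac{D^{2}\sqrt{3-D^{2}}}{4\sqrt{3-D^{2}}+\sqrt{3}(4-D^{2})}.
\]

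Next, I would invoke \Cref{lem:optinradius1} to replace $S$, without increasing its inradius, by a simplex in which every vertex is incident to at least two edges of length $D$; such a simplex has four edges of length $D$ and at most two opposing edges of lengths $a\le b\le D$. \Cref{lem:specialsimplices} then expresses $r(S)$ explicitly as a function of $(a,b,D)$ (with $R=1$). Finally, \Cref{lem:decreasingfct} pinpoints the unique minimizer of this function over $\{(a,b): a\le b\le D\}$: it occurs at $b=D$ and delivers the value
\[
\frac{D^{2}\sqrt{3-D^{2}}}{4\sqrt{3-D^{2}}-\sqrt{3}(D^{2}-4)},
\]
which coincides with the normalized right-hand side, since $-\sqrt{3}(D^{2}-4)=\sqrt{3}(4-D^{2})$. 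Reintroducing $R(S)$ by homogeneity recovers the inequality in its stated inhomogeneous form.

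For the equality case, I would trace back through the reduction chain. The ``if'' direction follows directly from \Cref{lem:specialsimplices} applied at $b=D$, since the condition that all four vertices lie on the sphere of radius $R(S)$ determines the remaining edge length $a$ and hence $r(S)$ uniquely. For the ``only if'' direction, the uniqueness assertion in \Cref{lem:decreasingfct} forces $b=D$ in the reduced simplex, while the strict-inequality clause of \Cref{cor:bdarea} (applicable because $\B$ is smooth) ensures that whenever the reduction in \Cref{lem:optinradius1} moves a vertex to a proper interior choice in $\conv_{\B,p}(\cdot)$, the inradius strictly increases. Equality therefore requires that no such strict reduction occurs, meaning $S$ itself already has five diametrical edges.

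The main obstacle I anticipate is the equality bookkeeping rather than any new geometric idea: the chain \Cref{lem:optinradius1} $\to$ \Cref{lem:specialsimplices} $\to$ \Cref{lem:decreasingfct} must be followed carefully, and the strict version of \Cref{cor:bdarea} used precisely at the points where a non-extremal $p^1$ or $p^2$ lies in the interior of the admissible region on the sphere, so that only the configuration with five diametrical edges survives as an extremizer.
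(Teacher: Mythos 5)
Your proposal is correct and follows essentially the same route as the paper: reduce via \Cref{lem:optinradius1} to simplices with at least two diametrical edges at every vertex, apply the explicit formula of \Cref{lem:specialsimplices}, and conclude with the monotonicity/uniqueness statement of \Cref{lem:decreasingfct}, using the smooth-case strictness of \Cref{cor:bdarea} for the equality characterization. Your treatment of the ``only if'' part of the equality case is, if anything, slightly more explicit than the paper's own terse argument.
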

\begin{proof}
By \Cref{lem:optinradius1}, the inradius of $S$ is at 
least the one of a simplex $S'$ with the property that each vertex is adjacent to at least two edges of length $D$ and $R(S')=R(S)$ and $D(S')=D(S)$. 

\Cref{lem:decreasingfct} shows that the expression in \Cref{lem:specialsimplices} is uniquely smallest if five edges have length $D$ and that in this case it equals the right-hand side of our claim. 

\end{proof}
    
In the planar Euclidean case, the corresponding left boundary was filled by isosceles triangles. Thus, we call the three-dimensional simplices attaining equality here, with only one shorter edge, \textit{isosceles} as well.

We are now ready to prove the main theorem.

\begin{proof}[Proof of \Cref{thm:fulldiagram}]
As already mentioned, 
it suffices to prove \eqref{eq:NewInequality} as the four equalities in \eqref{eq:4eineqsKnown} are already well-known. The first and fourth follow from the definitions, the second was shown by Jung \cite{Jung1901} (\cf~\eqref{eq:jungeucl}), and the third is shown in \cite{vrecica1981note} (it is also called the concentricity inequality, \cf~\cite{CompleteSimpl}). 
It also follows from the definition that $r(K)\geq 0$. 
To show \eqref{eq:NewInequality}, let $K\in\CK^3$ with $K\optc R(K)\B$ and $D(K)<\sqrt{3}R(K)$. If two or three touching points are enough to characterize the optimal containment via \Cref{prop:opt}, then there exists a subdimensional subset $K'$ of $K$ with $R(K')=R(K)$, and it follows from Jung's inequality that $D(K)\geq D(K')\geq \sqrt{3}R(K)$ (\cf~\Cref{rem:sqrt3}).

To achieve $D(K)<\sqrt{3}R(K)$ one needs four affinely independent touching points $p^1,p^2,p^3,p^4 \in \bd(K)\cap R(K)\S$. Defining $S:=\conv(\set{p^1,p^2,p^3,p^4})$, we obtain $R(S)=R(K)$, $r(S)\leq r(K)$, and $D(S)\leq D(K)<\sqrt{3}$. Now, we apply \Cref{lem:optinradius} and obtain
    \begin{align*}
        r(K)&\geq r(S)\geq \frac{D(S)^2\sqrt{3R(S)^2-D(S)^2}}{4R(S)\sqrt{3R(S)^2-D(S)^2}-\sqrt{3}(D(S)^2-4R(S)^2)}\\
        &\geq \frac{D(K)^2\sqrt{3R(K)^2-D(K)^2}}{4R(K)\sqrt{3R(K)^2-D(K)^2}-\sqrt{3}(D(K)^2-4R(K)^2)}
    \end{align*}
by using the fact that the right side of the inequality in \Cref{lem:optinradius} is decreasing in the diameter. 
If $K$ is not a simplex $r(K)>r(S)$ by \Cref{lem:inballsimplex} and for simplices we know from \Cref{lem:optinradius} that equality is attained exactly for our isosceles simplices. 

Since \eqref{eq:NewInequality} is continuous, it completely describes the left side of the diagram.

To show that \eqref{eq:4eineqsKnown} and \eqref{eq:NewInequality} together form a complete system of inequalities for the $(r,D,R)$-diagram, we need to describe the bodies that fill the induced boundaries of $f(\CK^3)$. Once this is settled, Proposition \ref{prop:starshape} would tell us that the bounded space contained between the provided boundaries is completely filled by images $f(K)$ of convex bodies $K$. 

First, the boundary $R(K)\leq \sqrt{\frac{3}{8}}D(K)$ can be filled by bodies between the regular 3-simplex $T$ and any (Scott-)completion of $T$ (i.e.  a complete set containing $T$ with the same diameter as $T$, \eg~the Meissner bodies \cite{meissner1912}). 
\Cref{prop:starshape} is sufficient to fill the boundaries induced by $D(K)\leq 2R(K)$ and $r(K)+R(K)\leq D(K)$ (by simply considering the convex combinations of a line segment and a Meissner body with the Euclidean ball, respectively). 
Isosceles triangles fill the boundary induced by $0\leq r(K)$ \cite{santalo}.
Finally, isosceles simplices attain equality for the new inequality. 

\end{proof}


\newpage

\appendix 

\section{} \label{sec:appendix}

\begin{proof}[Proof of \Cref{lem:decreasingfct}]
We calculated the inradius of simplices with only one pair of opposing edges being shorter than the diameter in \Cref{lem:specialsimplices}:
 \begin{equation}
 \label{eq:inradiussimplex1}
       \frac{\left(\sqrt{1-\frac{a^2}{4}}+\sqrt{1-\frac{b^2}{4}}\right)ab}{2a\sqrt{D^2-\frac{a^2}{4}}+2b\sqrt{D^2-\frac{b^2}{4}}}=  \frac{\sqrt{1-\frac{a^2}{4}}+\sqrt{1-\frac{b^2}{4}}}{\frac{2}{b}\sqrt{D^2-\frac{a^2}{4}}+\frac{2}{a}\sqrt{D^2-\frac{b^2}{4}}}.
    \end{equation}
    From 
\begin{equation*}
          \sqrt{1-\frac{a^2}{4}}\sqrt{1-\frac{b^2}{4}}=\frac{D^2-2}{2}
    \end{equation*}
    we obtain 
    \begin{equation}
    \label{eq:abyb}
        \frac{a^2}{4}=1-\frac{(D^2-2)^2}{4}\cdot \frac{1}{1-\frac{b^2}{4}}.
    \end{equation}
   Now, $a=b$ if and only if $\frac{b^2}{4}=\frac{4-D^2}{2}$. Thus, $a\leq b \le D$, implies $\frac{b^2}{4}\in \left[\frac{4-D^2}{2}, \frac{D^2}{4}\right]$. 
  Inserting \eqref{eq:abyb} into \eqref{eq:inradiussimplex1},
replacing $\frac{b^2}{4}$ by $x$, and simplifying yields 
\begin{equation}
\label{eq:appinradius}
\begin{split}
     &\frac{\sqrt{\frac{(D^2-2)^2}{4}\cdot\frac{1}{1-x}}+\sqrt{1-x}}{\sqrt{\frac{1}{x}}\sqrt{D^2-1+\frac{(D^2-2)^2}{4}\cdot\frac{1}{1-x}}+\sqrt{\frac{4(1-x)}{4(1-x)-(D^2-2)^2}}\sqrt{D^2-x}}\\
     &=\frac{\frac{(D^2-2)}{2}+1-x}{\sqrt{\frac{1}{x}}\sqrt{(D^2-1)(1-x)+\frac{(D^2-2)^2}{4}}+(1-x)\sqrt{\frac{4(D^2-4)}{4(1-x)-(D^2-2)^2}}}\\
     &=\frac{\frac{D^2}{2}-x}{
        \sqrt{\frac{1}{x}} \sqrt{x(1-D^2)+\frac{D^4}{4}} +(1-x)\sqrt{\frac{4(D^2-x)}{4(D^2-x)-D^4}} }.
\end{split}
\end{equation}
  Next, we show that \eqref{eq:appinradius} is strictly decreasing for $x\in\left[\frac{4-D^2}{2},\frac{D^2}{4}\right]$, and therefore, the smallest inradius is attained if five edges have length $D$. 
    For better readability, we replace $D^2$ by $d$ and consider the function  \begin{equation*}
    \begin{split}
        &f:\left[\frac{4-d}{2}, \frac{d}{4}\right]\to\R,\\
       & f(x)=\frac{\frac{d}{2}-x}{
        \sqrt{\frac{1}{x}} \sqrt{x(1-d)+\frac{d^2}{4}} +(1-x)\sqrt{\frac{4(d-x)}{4(d-x)-d^2}} }
    \end{split}
         \end{equation*}
         for $d\in\left[\frac{8}{3},3\right)$. Let us first compute the derivative:

\begin{align*}
    f'(x)&= \bigg[-\bigg(
        \sqrt{\frac{1}{x}} \sqrt{x(1-d)+\frac{d^2}{4}} +(1-x)\sqrt{\frac{4(d-x)}{4(d-x)-d^2}} \bigg)\\
        & -(\frac{d}{2}-x)\bigg(\frac{-d^2}{4x\sqrt{x}\sqrt{4x+d^2-4dx}}-\sqrt{\frac{4(d-x)}{4(d-x)-d^2}}\\
        &+ \frac{(1-x)d^2}{\sqrt{d-x}\sqrt{4(d-x)-d^2}(4(d-x)-d^2)}\bigg)
        \bigg] \\
    &\bigg/ \left(
        \sqrt{\frac{1}{x}} \sqrt{x(1-d)+\frac{d^2}{4}} +(1-x)\sqrt{\frac{4(d-x)}{4(d-x)-d^2}} \right)^2.
\end{align*}

The goal is to show that $f'(x)< 0, x\in\left(\frac{4-D^2}{2},\frac{D^2}{4}\right] $. The denominator is obviously non-negative. So, we only need to consider the numerator, which we split into two parts. With $g$ we denote the first and third summand and the remaining parts are called $h$: 
\begin{align*}
\label{eq:ap1}
    g(x)&=-\sqrt{\frac{1}{x}} \sqrt{x(1-d)+\frac{d^2}{4}}-(\frac{d}{2}-x)\frac{-d^2}{4x\sqrt{x}\sqrt{4x+d^2-4dx}},\\
    h(x)&=-(1-x)\sqrt{\frac{4(d-x)}{4(d-x)-d^2}}-(\frac{d}{2}-x)\bigg(-\sqrt{\frac{4(d-x)}{4(d-x)-d^2}}\\ &+ \frac{(1-x)d^2}{\sqrt{d-x}\sqrt{4(d-x)-d^2}(4(d-x)-d^2)}\bigg).
\end{align*}
Now, we show that $g(x)$ and $h(x)$ are strictly decreasing in the intervall $\left(\frac{4-d}{2},\frac{d}{4}\right]$. 
Simplyfing yields: 
\begin{equation*}
\label{eq:ap3}
\begin{split}
      g(x)&=\frac{-\sqrt{4x+d^2-4dx}}{2\sqrt{x}}+(\frac{d}{2}-x)\frac{d^2}{4x\sqrt{x}\sqrt{4x+d^2-4dx}}\\
      &=\frac{2x(4x+d^2-4dx)+d^2(\frac{d}{2}-x)}{4x\sqrt{x}\sqrt{4x+d^2-4dx}}\\
      &=\frac{(16d-16)x^2-6d^2x+d^3}{8x\sqrt{x}\sqrt{4x+d^2-4dx}}.
\end{split}
\end{equation*}
The derivative is 
\begin{equation*}
\label{eq:ap4}
\begin{split}
      g'(x)&=\frac{-d^2(3d^3-22d^2x-32x^2+16dx(1+2x))}{16x^2\sqrt{x}\sqrt{4x+d^2-4dx}(4x+d^2-4dx)}\\
      &=\frac{-d^2(d-2x)(3d^2-16(d-1)x)}{16x^2\sqrt{x}\sqrt{4x+d^2-4dx}(4x+d^2-4dx)}.
      \end{split}
\end{equation*}
Since $d-2x> 0$ and $3d^2-16(d-1)x> 0$ for $x\in \left(\frac{4-d}{2},\frac{d}{4}\right]$ and $d\in\left[\frac{8}{3},3\right)$, we obtain $g'(x)\leq 0$. 

We simplify $h$ as well:

\begin{equation*}
\label{eq:ap5}
\begin{split}
      h(x)&=\frac{(d-2)\sqrt{d-x}}{\sqrt{4(d-x)-d^2}}- \frac{(\frac{d}{2}-x)(1-x)d^2}{\sqrt{d-x}\sqrt{4(d-x)-d^2}(4(d-x)-d^2)}\\
      &=\frac{(d-2)(d-x)(4(d-x)-d^2)-(\frac{d}{2}-x)(1-x)d^2}{\sqrt{d-x}\sqrt{4(d-x)-d^2}(4(d-x)-d^2)}.
\end{split}
\end{equation*}
The derivative is
\begin{equation*}
\label{eq:ap6}
\begin{split}
      h'(x)=\frac{d^2((-6d^2+16d-32)x^2+(11d^3-50d^2+48d)x-4d^4+17d^3-16d^2)}{4\sqrt{d-x}(d-x)\sqrt{4(d-x)-d^2}(4(d-x)-d^2)^2}.
      \end{split}
\end{equation*}
The denominator and $d^2$ are non-negative. Thus, let us consider the quadratic function 
\begin{equation*}
    q(x)=(-6d^2+16d-32)x^2+(11d^3-50d^2+48d)x-4d^4+17d^3-16d^2.
\end{equation*}
Since $d\geq \frac{8}{3}$, we have
\begin{align*}
    -6d^2+16d-32=-6\left(d-\frac{4}{3}\right)^2-\frac{64}{3}<0. 
\end{align*}
Next, we show that $q(\frac{1}{2})< 0$ and $q'(\frac{1}{2})<0$ for every choice of $d\in\left[\frac{8}{3},3\right)$. Since $x\geq \frac{4-d}{2}\geq \frac{1}{2}$, this implies $q(x)< 0$ for all $x\in\left(\frac{4-d}{2},\frac{d}{4}\right]$.

For $d\in \left[\frac{8}{3},3\right]$, we have
\begin{align*}
    q(\tfrac{1}{2})&=(-6d^2+16d-32)\frac{1}{4}+(11d^3-50d^2+48d)\frac{1}{2}-4d^4+17d^3-16d^2\\
    &=-4d^4+\frac{45}{2}d^3-\frac{89}{2}d^2+28d-8\\
    &=\frac{1}{2}d^2(-8d^2+45d-68)+d\left(28-\frac{21}{2}d\right)-8\\
    &=\frac{1}{2}d^2\left(-8\left(d-\frac{45}{2}\right)^2-\frac{151}{32}\right)+d\left(28-\frac{21}{2}d\right)-8<0.
\end{align*}

Furthermore,
\begin{equation*}
    \begin{split}
        q'(\tfrac{1}{2})&= -6d^2+16d-32 +11d^3-50d^2+48d\\
        &=11d^3-56d^2+48d-32\\
        &=d(d-4)(11d-12)-32<0.
    \end{split}
\end{equation*}

Thus, we have shown that $h(x)+g(x)< g(\frac{4-d}{2})+h(\frac{4-d}{2})$.
We compute $g(\frac{4-d}{2})$ and $h(\frac{4-d}{2})$.
\begin{align*}
    g\left(\frac{4-d}{2}\right)&=\frac{4(d-1)(4-d)^2-3d^2(4-d)+d^3}{2\sqrt{2}(4-d)\sqrt{4-d}\sqrt{2(4-d)(1-d)+d^2}}\\
    &=\frac{8(d-2)^3}{2\sqrt{2}(4-d)\sqrt{(4-d)(3d-4)(d-2)}}\\
    &=\frac{2\sqrt{2}(d-2)^3}{(4-d)\sqrt{(4-d)(3d-4)(d-2)}}.
\end{align*}
For $h$, using $d-x=\frac{3d-4}{2}$, $1-x=\frac{d-2}{2}$, $\frac{d}{2}-x=d-2$, and $4(d-x)-d^2=(d-2)(4-d)$, we obtain 
\begin{align*}
     h\left(\frac{4-d}{2}\right)&=\frac{(d-2)^2(3d-4)(4-d)-d^2(d-2)^2}{2\sqrt{\frac{3d-4}{2}}\sqrt{(d-2)(4-d)}(d-2)(4-d)}\\
     &=\frac{-4(d-2)^4}{\sqrt{2}(d-2)(4-d)\sqrt{(4-d)(3d-4)(d-2)}}\\
    &=\frac{-2\sqrt{2}(d-2)^3}{(4-d)\sqrt{(4-d)(3d-4)(d-2)}}
\end{align*}

Together, we obtain $h(x)+g(x)< g(\frac{4-d}{2})+h(\frac{4-d}{2})=0$. Therefore, $f'(x)< 0$ for $x\in\left(\frac{4-d}{2},\frac{d}{4}\right]$ and $f$ is strictly decreasing. 

The inradius of a simplex $S$ optimally contained in $\B$ with at least five edges of length $D$ 
\begin{align*}
    r(S)&=f\left(\frac{D^2}{4}\right)=\frac{\frac{D^2}{2}-\frac{D^2}{4}}{\frac{2}{D}\sqrt{\frac{D^2}{4}(1-D^2)+\frac{D^4}{4}}+\left(1-\frac{D^2}{4}\right)\sqrt{\frac{3D^2}{3D^2-D^4}}}\\
    &=\frac{D^2}{4+(4-D^2)\sqrt{\frac{3}{3-D^2}}}=\frac{D^2\sqrt{3-D^2}}{4\sqrt{3-D^2}-\sqrt{3}(D^2-4)}
\end{align*}
and they are the unique minimizers.
\end{proof}

\newpage

René Brandenberg -- 
Technical University of Munich, School of Computation, Information and Technology, Department of Mathematics, Germany. \\
\textbf{rene.brandenberg@tum.de}

Bernardo González Merino -- 
University of Murcia, Faculty of Computer Science, Department of Engineering and Tecnology of Computers, Area of Applied Mathematics, Spain.\\
\textbf{bgmerino@um.es}

Mia Runge -- 
Technical University of Munich, School of Computation, Information and Technology, Department of Mathematics, Germany. \\
\textbf{mia.runge@tum.de}

\end{document}